\newcommand{\wh}{\widehat}                  
\newcommand{\wt}{\widetilde}                
\newtheorem{thm}{Theorem}[section]
\newtheorem{lem}[thm]{Lemma}
\newtheorem{prop}[thm]{Proposition}
\newtheorem{cor}[thm]{Corollary}
\theoremstyle{definition}
\newtheorem{defn}[thm]{Definition}
\newtheorem{exam}[thm]{Example}
\theoremstyle{remark}
\newtheorem{rem}[thm]{Remark}
\begin{document}

\title     {
                On the skein polynomial for links
           }
\author    {
                             Boju Jiang
           }
\author    {
                             Jiajun Wang
           }
\author    {
                             Hao Zheng
           }
\address   {
                      Department of Mathematics\\
                      Peking University\\
                      Beijing  100871\\
                      China
           }
\email     {
                      bjjiang@math.pku.edu.cn
           }
\email     {
                      wjiajun@math.pku.edu.cn
           }
\email     {
                      hzheng@math.pku.edu.cn
           }
\thanks    {Partially supported by NSFC grant \#11131008}

\subjclass [2010]{Primary 57M25; Secondary 20F36}

\keywords {the skein polynomial, HOMFLY polynomial, Jones polynomial,
Alexander-Conway polynomial, skein relations}

\date{}

\begin{abstract}
We give characterizations of the skein polynomial for links
(as well as Jones and Alexander-Conway polynomials derivable from it),
avoiding the usual ``smoothing of a crossing'' move.
As by-products we have characterizations of these polynomials for knots,
and for links with any given number of components.
\end{abstract}

\maketitle

\section {Introduction}
\label{sec:Intro}

The skein polynomial (as called in \cite[Chapter 8]{K1},
also known as HOMFLY or HOMFLY-PT polynomial),
$P_L(a,z) \in \mathbb Z[a^{\pm1},z^{\pm1}]$, is an invariant for oriented links.
Here $\mathbb Z[a^{\pm1},z^{\pm1}]$ is the ring of Laurent polynomials in two variables $a$ and $z$, with integer coefficients.
It is defined to be the invariant of oriented links satisfying the axioms
\begin{gather*}
a^{-1}\cdot P
\begin{pmatrix}
\includegraphics[width=.8cm]{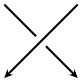}
\end{pmatrix}
-a\cdot P
\begin{pmatrix}
\includegraphics[width=.8cm]{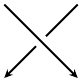}
\end{pmatrix}
=z\cdot P
\begin{pmatrix}
\includegraphics[width=.8cm]{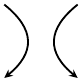}
\end{pmatrix} ;
\tag*{\rm(I)}
\\
P
\begin{pmatrix}
\includegraphics[width=.8cm]{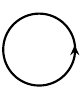}
\end{pmatrix}
=1 .
\tag*{\rm(O)}
\end{gather*}

The Alexander-Conway polynomial $\Delta_L \in \mathbb Z[t^{\pm\frac12}]$ and
the Jones polynomial $V_L \in \mathbb Z[t^{\pm\frac12}]$ are related to the skein polynomial:
\[
\Delta_L(t)=P_L(1,t^{\frac12}-t^{-\frac12}),
\qquad
V_L(t)=P_L(t,t^{\frac12}-t^{-\frac12}).
\]

Our main result is

\begin{thm}
\label{thm:HOMFLY}
The skein polynomial $P_L \in \mathbb Z[a^{\pm1},z^{\pm1}]$ is
the invariant of oriented links
determined uniquely by the following four axioms.
{\allowdisplaybreaks
\begin{gather*}
a^{-2}\cdot P
\begin{pmatrix}
\includegraphics[width=.8cm]{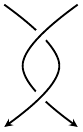}
\end{pmatrix}
+a^2\cdot P
\begin{pmatrix}
\includegraphics[width=.8cm]{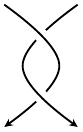}
\end{pmatrix}
=(2+z^2)
\cdot P
\begin{pmatrix}
\includegraphics[width=.8cm]{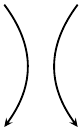}
\end{pmatrix} ;
\tag*{\rm(II)}
\\
a^{-1}\cdot P
\begin{pmatrix}
\includegraphics[width=1.2cm]{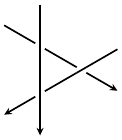}
\end{pmatrix}
-a\cdot P
\begin{pmatrix}
\includegraphics[width=1.2cm]{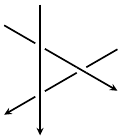}
\end{pmatrix}
=
a^{-1}\cdot P
\begin{pmatrix}
\includegraphics[width=1.2cm]{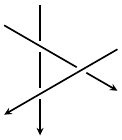}
\end{pmatrix}
-a\cdot P
\begin{pmatrix}
\includegraphics[width=1.2cm]{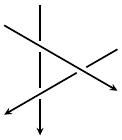}
\end{pmatrix} ;
\tag*{\rm(III)}
\\
P
\begin{pmatrix}
\includegraphics[height=1.2cm]{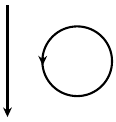}
\end{pmatrix}
=z^{-1}(a^{-1}-a)\cdot
P
\begin{pmatrix}
\;\;
\includegraphics[height=1.2cm]{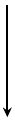}
\;\;
\end{pmatrix} ;
\tag*{\rm(IO)}
\\
P
\begin{pmatrix}
\includegraphics[width=.8cm]{fig_012/O-0}
\end{pmatrix}
=1 .
\tag*{\rm(O)}
\end{gather*}
}
\end{thm}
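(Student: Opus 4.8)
The plan is to prove the theorem in two halves: first that the skein polynomial $P$ actually satisfies (II), (III), (IO) (soundness), and then that any oriented-link invariant satisfying (II), (III), (IO), (O) must equal $P$ (uniqueness). Only the second half is genuinely hard.

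For soundness I would stay inside the local tangle calculus generated by (I) and (O). Writing (I) as the Hecke-type quadratic $\sigma^{2}=az\,\sigma+a^{2}$ for a single positive crossing $\sigma$ (and the mirror identity for $\sigma^{-1}$), one eliminates $\sigma$ and $\sigma^{-1}$ from the combination $a^{-2}\sigma^{2}+a^{2}\sigma^{-2}$ to get the constant $2+z^{2}$; since the three tangles $\sigma^{2},\sigma^{-2},1$ share the same endpoints and connection pattern, this is precisely (II), and it is a relation among links of equal component number. Axiom (III) follows by applying (I) to the middle crossing on each side: after the Reidemeister~II cancellation $\sigma_1\sigma_1^{-1}\simeq 1$, both sides reduce to $z\cdot P$ of the same underlying diagram and hence agree. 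Axiom (IO) is the standard split-circle formula $P(L\sqcup O) = z^{-1}(a^{-1}-a)P(L)$, obtained by applying (I) at a crossing that joins $L$ to a small disjoint unknot.

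For uniqueness, let $Q$ be any invariant satisfying (II), (III), (IO), (O); I want to force $Q=P$ by a double induction, outer on the number of components $c$ and inner on a complexity measure of a chosen diagram (say a count of crossings that are \emph{out of order} with respect to an ascending ordering, as in the classical HOMFLY uniqueness argument). It is convenient to present links as braid closures, so that the braid-local tangles of (II) and (III) apply as written. The base data are the unlinks: (O) together with repeated use of (IO) forces $Q(U_c) = (z^{-1}(a^{-1}-a))^{c-1}$, matching $P$. The engine of the inner step is (II), rearranged as $P(D\,\sigma_i^{2}) = a^{2}(2+z^{2})P(D) - a^{4}P(D\,\sigma_i^{-2})$: it trades a positive double crossing for a diagram with strictly fewer crossings, namely $P(D)$, together with the oppositely clasped diagram, all \emph{without changing $c$}. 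Relation (III) is used for coherence: it is the crossing-change incarnation of the braid/flype relation, letting one slide the locus of a double crossing past a third strand so that (II) can be applied where needed and so that the end result is independent of the order of reductions. Finally (IO) is the only move that crosses between strata, lowering $c$ once a diagram has been simplified to a split form.

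The main obstacle, and where I expect the real work to lie, is exactly that (II) is a \emph{second-order} relation confined to a single component stratum. By itself it under-determines $Q$ within that stratum: it is a two-term recurrence requiring two boundary values, and it can never relate a diagram to its single-crossing smoothing, which would change $c$. Concretely, (II) alone cannot separate the two chiralities of a clasp—for the two Hopf links it yields only one linear relation—so extra input is indispensable. The crux is therefore to show that (III), applied in suitably chosen multi-strand diagrams, supplies the missing relations, and that together with the inter-stratum reductions from (IO) this pins every value down. Establishing this amounts to proving that crossing changes can be performed and composed \emph{consistently} using only (II), (III), (IO) and (O)—so that the reduction of an arbitrary diagram to unlinks is well defined and forces $Q=P$—while verifying that no step covertly requires the forbidden single-crossing move (I).
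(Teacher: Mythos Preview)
Your soundness half is fine: the derivations of (II), (III), (IO) from (I) and (O) are correct and essentially what the paper records.

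The uniqueness half, however, has a genuine gap. You correctly isolate the obstacle---(II) is a second-order recurrence that by itself leaves the two Hopf clasps undetermined---but your proposal does not supply the mechanism by which (III) resolves it; you only assert that it must. The ascending-diagram induction you borrow from the classical HOMFLY argument relies on changing a \emph{single} crossing via (I) and passing to the smoothing; with only (II) available you can change crossings only in adjacent pairs $\sigma_i^{\pm2}\leftrightarrow\sigma_i^{\mp2}$, and a generic bad crossing in an ascending traversal does not come paired that way. So your inner complexity does not obviously decrease, and ``(III) for coherence'' is not yet a move in your scheme.

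What the paper does instead is work entirely in the braid-closure picture and prove two concrete lemmas. First, an algebraic reduction: modulo the two-sided ideal generated by (II$_{\mathrm B}$) and (III$_{\mathrm B}$), every $\beta\in B_n$ is a $\Lambda$-combination of words $\alpha\,\sigma_{n-1}^{k}\,\gamma$ with $\alpha,\gamma\in B_{n-1}$ and $k\in\{0,\pm1,2\}$. Second---and this is exactly the ``missing relation'' you are looking for---by inserting (III$_{\mathrm B}$) on three strands and simplifying the resulting closed braids via conjugacy and Markov moves, one obtains a \emph{new} linear relation
\[
a^{-1}P\bigl([\beta\sigma_n^{2}]_{n+1}^{\;\wh{}}\bigr)+aP\bigl([\beta\sigma_n^{-2}]_{n+1}^{\;\wh{}}\bigr)=(a^{-1}+a)\,P\bigl([\beta]_{n+1}^{\;\wh{}}\bigr),
\]
which together with (II) and (IO) yields the stabilization formula $(\Phi)$: $P\bigl([\beta\sigma_n^{2}]_{n+1}^{\;\wh{}}\bigr)=az^{-1}(1+z^{2}-a^{2})\,P\bigl([\beta]_n^{\;\wh{}}\bigr)$. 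With these in hand, the cases $k=\pm1$ (Markov), $k=0$ (IO), and $k=2$ ($\Phi$) each drop the braid index by one, and a clean induction on $n$ finishes. Your outline would become a proof once you produce analogues of these two lemmas; without them the plan does not terminate.
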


A parallel result is for the Jones polynomial.
It is not a direct corollary of the above theorem,
because the substitutions $a\mapsto t$ and $z\mapsto (t^{\frac12}-t^{-\frac12})$
do not send $\mathbb Z[a^{\pm1},z^{\pm1}]$ into $Z[t^{\pm\frac12}]$.

\begin{thm}
\label{thm:Jones}
The Jones polynomial $V_L \in \mathbb Z[t^{\pm\frac12}]$ is
the invariant of oriented links
determined uniquely by the following four axioms.
{\allowdisplaybreaks
\begin{gather*}
t^{-2}\cdot V
\begin{pmatrix}
\includegraphics[width=.8cm]{fig_012/1a1a_dd-0}
\end{pmatrix}
+t^2\cdot V
\begin{pmatrix}
\includegraphics[width=.8cm]{fig_012/1b1b_dd-0}
\end{pmatrix}
=(t+t^{-1})
\cdot V
\begin{pmatrix}
\includegraphics[width=.8cm]{fig_012/1oo_dd-0}
\end{pmatrix} ;
\tag*{\rm(II$_V$)}
\\
t^{-1}\cdot V
\begin{pmatrix}
\includegraphics[width=1.2cm]{fig_3icons/1a2a1b_ddd-0}
\end{pmatrix}
-t\cdot V
\begin{pmatrix}
\includegraphics[width=1.2cm]{fig_3icons/1a2b1b_ddd-0}
\end{pmatrix}
=
t^{-1}\cdot V
\begin{pmatrix}
\includegraphics[width=1.2cm]{fig_3icons/1b2a1a_ddd-0}
\end{pmatrix}
-t\cdot V
\begin{pmatrix}
\includegraphics[width=1.2cm]{fig_3icons/1b2b1a_ddd-0}
\end{pmatrix} ;
\tag*{\rm(III$_V$)}
\\
V
\begin{pmatrix}
\includegraphics[height=1.2cm]{fig_012/IO-0}
\end{pmatrix}
=-(t^{\frac12}+t^{-\frac12})\cdot
V
\begin{pmatrix}
\;\;
\includegraphics[height=1.2cm]{fig_012/I-0}
\;\;
\end{pmatrix} ;
\tag*{\rm(IO$_V$)}
\\
V
\begin{pmatrix}
\includegraphics[width=.8cm]{fig_012/O-0}
\end{pmatrix}
=1 .
\tag*{\rm(O$_V$)}
\end{gather*}
}
\end{thm}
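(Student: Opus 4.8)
The plan is to deduce Theorem~\ref{thm:Jones} from Theorem~\ref{thm:HOMFLY} by specializing $a\mapsto t$ and $z\mapsto t^{\frac12}-t^{-\frac12}$, with the coefficient ring as the central concern. The first thing I would record is that each Jones axiom is precisely the image of the corresponding HOMFLY axiom under this substitution. Since $2+z^2\mapsto t+t^{-1}$, axiom (II) becomes (II$_V$); axiom (III) has coefficients only in $a^{\pm1}$, so it becomes (III$_V$); and (O) is unchanged. The delicate case is (IO), where the coefficient $z^{-1}(a^{-1}-a)$ specializes to $-(t^{\frac12}+t^{-\frac12})$, because $a^{-1}-a\mapsto t^{-1}-t=-(t^{\frac12}-t^{-\frac12})(t^{\frac12}+t^{-\frac12})$ carries a factor of $z$ that cancels the $z^{-1}$. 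Thus all four substituted coefficients land in $\mathbb Z[t^{\pm\frac12}]$, even though the substitution itself does not map $\mathbb Z[a^{\pm1},z^{\pm1}]$ into $\mathbb Z[t^{\pm\frac12}]$.

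Existence is then immediate. Since $V_L=P_L(t,t^{\frac12}-t^{-\frac12})$ by definition and $P$ satisfies (II)--(O) by Theorem~\ref{thm:HOMFLY}, the specialization $V$ satisfies the four identities (II$_V$)--(O$_V$) obtained above, so the Jones polynomial is an invariant of the asserted type. In particular (O$_V$) and (IO$_V$) pin down the value on the $n$-component unlink as $(-(t^{\frac12}+t^{-\frac12}))^{n-1}$.

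For uniqueness I would re-run the reduction underlying the proof of Theorem~\ref{thm:HOMFLY}. That argument shows that (II), (III), (IO), (O), applied as rewrite rules, compute $P(L)$ for every diagram by reducing it to a fixed $\mathbb Z[a^{\pm1},z^{\pm1}]$-linear combination of values on unlinks. Given any $\mathbb Z[t^{\pm\frac12}]$-valued invariant $W$ satisfying (II$_V$)--(O$_V$), I would apply the identical sequence of reductions to $W$: each step is legitimate because it is exactly one of the four Jones identities. This expresses $W(L)$ as a $\mathbb Z[t^{\pm\frac12}]$-linear combination of the $W$-values on unlinks, and the same computation expresses $V(L)$ as the same combination of $V$-values on unlinks. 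Since $W$ and $V$ agree on unlinks, $W(L)=V(L)$, giving uniqueness.

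The main obstacle is precisely the coefficient ring, which is why this is not a formal corollary of Theorem~\ref{thm:HOMFLY}: a $\mathbb Z[t^{\pm\frac12}]$-valued $W$ need not lift to a $\mathbb Z[a^{\pm1},z^{\pm1}]$-valued invariant, so the earlier uniqueness statement cannot simply be invoked. What must be verified is that the reduction of Theorem~\ref{thm:HOMFLY} never divides by the non-units $z=t^{\frac12}-t^{-\frac12}$ or $2+z^2=t+t^{-1}$ --- equivalently, that it uses (II) only to eliminate the doubled-crossing terms (never to solve for the parallel-strand term) and that $z^{-1}$ occurs only inside the combination $z^{-1}(a^{-1}-a)$ coming from (IO). Granting this ``no illegal division'' property, every coefficient produced by the reduction specializes into $\mathbb Z[t^{\pm\frac12}]$ and the whole argument transfers verbatim to the smaller ring; checking this property at each stage of the reduction is where the real work lies.
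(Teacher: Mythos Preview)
Your plan is correct and matches the paper's approach: the paper proves Theorem~\ref{thm:Jones} by the one-line remark that the proof of Theorem~\ref{thm:HOMFLY} ``works well with the substitutions $a\mapsto t$ and $z\mapsto t^{\frac12}-t^{-\frac12}$'', which is exactly the re-run you describe, and you have correctly isolated the coefficient-ring issue as the only subtlety. One small addition: the inductive step of Theorem~\ref{thm:HOMFLY} uses not only (IO) but also the derived relation~($\Phi$) for the case $k=2$, so you should record that its coefficient $az^{-1}(1+z^2-a^2)$ also specializes cleanly to $-t^{3/2}(t+t^{-1})\in\mathbb Z[t^{\pm\frac12}]$ (this is Corollary~\ref{cor:stabilization_Jones}); together with your check of (IO$_V$) and the observation that the algebraic reduction of Lemma~\ref{lem:reduction} uses only coefficients in $\mathbb Z[a^{\pm1},z]$, this completes the ``no illegal division'' verification you flagged.
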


For the Alexander-Conway polynomial, the result takes a slightly different form.
We switch to a ($\Phi$)-type axiom because the (IO)-type one degenerates into a
consequence of (II) and (III) (see Corollary~\ref{cor:stabilization_Alexander-Conway}).

\begin{thm}
\label{thm:Alexander-Conway}
The Alexander-Conway polynomial $\Delta_L \in \mathbb Z[t^{\pm\frac12}]$ is
the invariant of oriented links
determined uniquely by the following four axioms.
{\allowdisplaybreaks
\begin{gather*}
\Delta
\begin{pmatrix}
\includegraphics[width=.8cm]{fig_012/1a1a_dd-0}
\end{pmatrix}
+ \Delta
\begin{pmatrix}
\includegraphics[width=.8cm]{fig_012/1b1b_dd-0}
\end{pmatrix}
=(t+t^{-1})
\cdot \Delta
\begin{pmatrix}
\includegraphics[width=.8cm]{fig_012/1oo_dd-0}
\end{pmatrix} ;
\tag*{\rm(II$_\Delta$)}
\\
\Delta
\begin{pmatrix}
\includegraphics[width=1.2cm]{fig_3icons/1a2a1b_ddd-0}
\end{pmatrix}
- \Delta
\begin{pmatrix}
\includegraphics[width=1.2cm]{fig_3icons/1a2b1b_ddd-0}
\end{pmatrix}
=
\Delta
\begin{pmatrix}
\includegraphics[width=1.2cm]{fig_3icons/1b2a1a_ddd-0}
\end{pmatrix}
- \Delta
\begin{pmatrix}
\includegraphics[width=1.2cm]{fig_3icons/1b2b1a_ddd-0}
\end{pmatrix} ;
\tag*{\rm(III$_\Delta$)}
\\
\Delta
\begin{pmatrix}
\includegraphics[height=1.2cm]{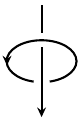}
\end{pmatrix}
=(t^{\frac12}-t^{-\frac12})\cdot
\Delta
\begin{pmatrix}
\;\;
\includegraphics[height=1.2cm]{fig_012/I-0}
\;\;
\end{pmatrix} ;
\tag*{($\Phi_\Delta$)}
\\
\Delta
\begin{pmatrix}
\includegraphics[width=.8cm]{fig_012/O-0}
\end{pmatrix}
=1 .
\tag*{\rm(O$_\Delta$)}
\end{gather*}
}
\end{thm}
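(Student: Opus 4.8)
The plan is to establish the two halves of the assertion separately: that $\Delta_L$ satisfies the four axioms (existence), and that the four axioms admit at most one solution (uniqueness). For existence I would work from the defining axioms (I) and (O) of $P_L$, reproducing the derivations that underlie Theorem~\ref{thm:HOMFLY}. Applying (I) at a single crossing twice yields the ``sum'' relation $a^{-2}P(\sigma_i^{2})+a^{2}P(\sigma_i^{-2})=(2+z^2)P(e)$; applying (I) to the central crossing of each of the triples $\sigma_1\sigma_2^{\pm}\sigma_1^{-1}$ and $\sigma_1^{-1}\sigma_2^{\pm}\sigma_1$ and observing that both resolutions produce the same trivial tangle yields (III); and resolving one crossing of a clasp, together with the value $z^{-1}(a^{-1}-a)$ of a split unknot, yields the clasp relation $P(\Phi_+)=z^{-1}(a-a^3+az^2)\,P(I)$. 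Setting $a=1$ and $z=t^{1/2}-t^{-1/2}$ turns $2+z^2$ into $t+t^{-1}$ and $z^{-1}(a-a^3+az^2)$ into $z=t^{1/2}-t^{-1/2}$, so these four relations become exactly (II$_\Delta$), (III$_\Delta$), ($\Phi_\Delta$), (O$_\Delta$); since $\Delta_L=P_L(1,t^{1/2}-t^{-1/2})$, they hold for $\Delta_L$.

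For uniqueness, let $F$ be any invariant of oriented links satisfying the four axioms. Following the scheme of Theorem~\ref{thm:HOMFLY}, I would present every link as a closed braid (Alexander's theorem) and run a double induction, the outer on the number of strands and the inner on a word-complexity measure of the braid. The engine of the inner reduction is (II$_\Delta$), read locally (with $R[\,\cdot\,]$ denoting the rest of a fixed diagram) as $F(R[\sigma_i^{2}])=(t+t^{-1})F(R[e])-F(R[\sigma_i^{-2}])$: it trades a squared generator for a term with two fewer crossings plus a term with that generator inverted. Using the braid relations (which $F$ respects, being a link invariant) together with (III$_\Delta$) to comb the word so that a repeated generator is always available, one reduces $F(\widehat\beta)$ to a combination of values on positive permutation braids and on braids with strictly fewer crossings or strands. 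The function of (III$_\Delta$) here is to guarantee that this reduction is confluent and compatible with conjugation (Markov's first move), so that the computed value is independent of the choices made.

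It remains to evaluate the base cases, and this is where the Alexander-Conway situation departs from Theorems~\ref{thm:HOMFLY} and~\ref{thm:Jones}. The closure of the trivial braid on $c$ strands is the $c$-component unlink $U_c$, so I must force $F(U_1)=1$ and $F(U_c)=0$ for $c\ge 2$. The unknot is pinned by (O$_\Delta$). For the split unlinks the naive (IO)-type relation is useless: by Corollary~\ref{cor:stabilization_Alexander-Conway} it degenerates to the tautology $0=0$, which is precisely why ($\Phi_\Delta$) is taken as an axiom. Reading (II$_\Delta$) across the two crossings of a clasp gives $F(\Phi_+)+F(\Phi_-)=(t+t^{-1})\,F(I\sqcup O)$, and together with ($\Phi_\Delta$), namely $F(\Phi_+)=(t^{1/2}-t^{-1/2})\,F(I)$, this is the mechanism by which clasped circles are stripped off one at a time and the unlink values are propagated down to $F(U_1)$.

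The step I expect to be the main obstacle is exactly this base-case analysis, intertwined with confluence. Because the oriented smoothing is deliberately avoided, (II$_\Delta$) supplies only the ``sum'' (double-crossing) relation and (III$_\Delta$) only a crossing-change coherence; neither the single-crossing Conway relation nor the split-additivity of $\Delta$ is directly available, so I cannot simply invoke ``a split component kills $\Delta$''. Instead the vanishing $F(U_c)=0$ for $c\ge 2$ must be extracted from the global consistency of $F$: since $F$ is assumed to be a well-defined invariant, every reduction path of a fixed link must yield the same element of $\mathbb Z[t^{\pm1/2}]$, and I would exploit this, in tandem with ($\Phi_\Delta$) and (II$_\Delta$), to solve for the unlink values. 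Proving that the combing reduction terminates, and verifying that (III$_\Delta$) is exactly the coherence condition that makes it independent of choices, is the technical heart of the argument.
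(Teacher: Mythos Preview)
Your overall architecture is right, and the existence half is fine. The gap is in uniqueness, specifically your treatment of the split case.

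You write that the (IO)-type relation ``degenerates to the tautology $0=0$'' and that $F(U_c)=0$ for $c\ge2$ ``must be extracted from the global consistency of $F$''. This misreads Corollary~\ref{cor:stabilization_Alexander-Conway}. That corollary does not assert $0=0$; it asserts that any invariant satisfying (II$_\Delta$) and (III$_\Delta$) already kills every split link. Concretely, Lemma~\ref{lem:stabilization3} with $a\mapsto 1$ reads $(t^{1/2}-t^{-1/2})^{2}\cdot F\bigl([\beta\gamma^{\triangleright n}]_{n+p}^{\;\wh{}}\bigr)=0$, and since $t^{1/2}-t^{-1/2}$ is not a zero-divisor this forces $F$ of the split link to vanish. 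So $F(U_c)=0$ for $c\ge 2$ is a \emph{direct} consequence of (II$_\Delta$)+(III$_\Delta$); no appeal to ``different reduction paths agree'' is needed, and your proposed mechanism via $F(\Phi_+)+F(\Phi_-)=(t+t^{-1})F(I\sqcup O)$ is unnecessary (and incomplete, since you never say how to get $F(\Phi_-)$). The reason ($\Phi_\Delta$) replaces (IO) as an axiom is not that (IO$_\Delta$) is vacuous, but that it is \emph{redundant}: being already implied by (II$_\Delta$)+(III$_\Delta$), it cannot help determine the invariant, so a genuinely new axiom is required to handle the $k=2$ step.

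Once this is clear, uniqueness is literally the proof of Theorem~\ref{thm:HOMFLY} with the substitutions $a\mapsto 1$, $z\mapsto t^{1/2}-t^{-1/2}$: Lemma~\ref{lem:reduction} reduces any $\beta\in B_n$, modulo $\mathfrak I_n$, to combinations of $\alpha\sigma_{n-1}^{k}\gamma$ with $\alpha,\gamma\in B_{n-1}$ and $k\in\{0,\pm1,2\}$; then $k=\pm1$ is a Markov move, $k=0$ gives value $0$ by the derived (IO$_\Delta$), and $k=2$ is ($\Phi_\Delta$). Your plan to comb down to ``positive permutation braids'' and then run a separate confluence argument is a detour from this: the paper's target form is chosen precisely so that a single Markov/split/clasp step drops the strand count, and relation (III$_\Delta$) is consumed \emph{inside} the case analysis of Lemma~\ref{lem:key_reduction} (the nine $\sigma_2\sigma_1^{\ell}\sigma_2^{m}$ cases), not as an external coherence check.
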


If we restrict our attention to oriented links with a fixed number $\mu>0$ of components,
the axiom (IO) becomes irrelevant but we must pick a suitable normalization.
Let $U_\mu$ denote the $\mu$-component unlink, and
let $C_\mu$ denote the $\mu$-component oriented chain
where adjacent rings have linking number $+1$.
(In terms of closed braids, $U_\mu$ is the closure of the trivial braid $e\in B_\mu$,
and $C_\mu$ is the closure of the braid $\sigma_1^2\sigma_2^2\dots\sigma_{\mu-1}^2\in B_\mu$.)
We can use either $U_\mu$ or $C_\mu$ (but $U_\mu$ is preferred) to normalize the skein or Jones polynomial,
but for Alexander-Conway polynomial we can only use $C_\mu$.

\begin{thm}
\label{thm:HOMFLY_knot}
The skein polynomial $P_L$ is
the invariant of oriented $\mu$-compo\-nent links
determined uniquely by the following three axioms.
{\allowdisplaybreaks
\begin{gather*}
a^{-2}\cdot P
\begin{pmatrix}
\includegraphics[width=.8cm]{fig_012/1a1a_dd-0}
\end{pmatrix}
+a^2\cdot P
\begin{pmatrix}
\includegraphics[width=.8cm]{fig_012/1b1b_dd-0}
\end{pmatrix}
=(2+z^2)
\cdot P
\begin{pmatrix}
\includegraphics[width=.8cm]{fig_012/1oo_dd-0}
\end{pmatrix} ;
\tag*{\rm(II)}
\\
a^{-1}\cdot P
\begin{pmatrix}
\includegraphics[width=1.2cm]{fig_3icons/1a2a1b_ddd-0}
\end{pmatrix}
-a\cdot P
\begin{pmatrix}
\includegraphics[width=1.2cm]{fig_3icons/1a2b1b_ddd-0}
\end{pmatrix}
=
a^{-1}\cdot P
\begin{pmatrix}
\includegraphics[width=1.2cm]{fig_3icons/1b2a1a_ddd-0}
\end{pmatrix}
-a\cdot P
\begin{pmatrix}
\includegraphics[width=1.2cm]{fig_3icons/1b2b1a_ddd-0}
\end{pmatrix} ;
\tag*{\rm(III)}
\\[2ex]
P(U_\mu) = (z^{-1}(a^{-1}-a))^{\mu-1} .
\tag*{\rm(U)}
\end{gather*}
}
\end{thm}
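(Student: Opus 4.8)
The plan is to split the statement into an existence part and a uniqueness part, and to obtain both from the same reduction that underlies Theorem~\ref{thm:HOMFLY}; the only genuinely new ingredient is the observation that (II) and (III) never change the number of components, so that once $\mu$ is fixed a single normalization suffices in place of (IO) and (O).

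For existence I would verify that the genuine skein polynomial $P_L$ satisfies (II), (III) and (U). Axioms (II) and (III) are purely local consequences of the defining relation (I). Applying (I) to each of the two crossings of the local tangle $\sigma^2$, and to $\sigma^{-2}$, expresses $P(\sigma^2w)$ and $P(\sigma^{-2}w)$ through $P(\sigma w)$, $P(\sigma^{-1}w)$ and $P(w)$; one further application of (I) at the single crossing eliminates $P(\sigma w)$ and $P(\sigma^{-1}w)$ via $a^{-1}P(\sigma w)-aP(\sigma^{-1}w)=zP(w)$, and the two relations combine into $a^{-2}P(\sigma^2w)+a^2P(\sigma^{-2}w)=(2+z^2)P(w)$, which is (II). Likewise, applying (I) to the middle crossing on each side of (III) rewrites both sides as $z$ times the skein polynomial of the common oriented resolution: resolving the middle crossing turns the local box $\sigma_1\sigma_2^{\pm1}\sigma_1^{-1}$ and $\sigma_1^{-1}\sigma_2^{\pm1}\sigma_1$ into the trivial three-strand braid in both cases, so the two sides coincide. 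Finally (U) is the standard evaluation $P(U_\mu)=(z^{-1}(a^{-1}-a))^{\mu-1}$, obtained by peeling off one split unknot at a time using (IO) and (O) (equivalently from (I) and (O)).

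For uniqueness, let $\tilde P$ be any invariant of oriented $\mu$-component links satisfying (II), (III) and (U); I want $\tilde P=P$ on such links. The decisive point is that (II) and (III) are \emph{component-preserving}: in each of the local pictures $\sigma^2,\sigma^{-2},e$ appearing in (II) the two strands realize the identity permutation, and in each of the four pictures appearing in (III) the three strands realize the permutation $(1\,3)$; hence all links occurring in a single instance of (II) or (III) have the same connectivity, and therefore the same number of components. Consequently the reduction that proves Theorem~\ref{thm:HOMFLY} --- which uses only (II) and (III) to rewrite the invariant of an arbitrary link in terms of invariants of unlinks --- stays inside the class of $\mu$-component links when started on a $\mu$-component link, and so terminates at the unique $\mu$-component unlink $U_\mu$. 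Thus it yields an identity $\tilde P(L)=f_L\,\tilde P(U_\mu)$ with a coefficient $f_L$ depending only on $L$; running the very same reduction on the genuine $P$ gives $P(L)=f_L\,P(U_\mu)$ with the identical $f_L$. Since (U) forces $\tilde P(U_\mu)=(z^{-1}(a^{-1}-a))^{\mu-1}=P(U_\mu)$, we conclude $\tilde P(L)=P(L)$.

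The main obstacle is isolating this reduction in a genuinely component-preserving form. It is not enough that (II) and (III) preserve components in each single instance: I must check that the inductive scheme behind Theorem~\ref{thm:HOMFLY} (presenting $L$ as a closed braid and inducting on a complexity such as the crossing number together with a count of the crossings that must be switched to reach an ascending diagram) rests solely on (II) and (III) for the geometric reduction to unlinks, and calls on (IO) and (O) \emph{only} to evaluate the terminal unlinks --- in particular that no stray split unknot is ever produced that one would be tempted to absorb by (IO). Granting that the reduction to unlinks is built entirely from (II) and (III), restricting to a fixed $\mu$ is then automatic, with (U) supplying the one normalization that (IO) and (O) supplied in the unrestricted case; the degenerate case $\mu=1$ recovers the characterization of $P$ on knots, since there (U) is precisely (O).
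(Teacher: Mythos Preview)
Your existence argument is fine, and you are right that every instance of (II) and (III) preserves the underlying permutation and hence the number of components. The difficulty is exactly where you flag it, and your ``Granting that \dots'' concedes the crucial step without carrying it out. In the proof of Theorem~\ref{thm:HOMFLY}, the algebraic reduction of Lemma~\ref{lem:reduction} (which does use only (II) and (III)) produces braids $\alpha\sigma_{n-1}^k\gamma$ with $k\in\{0,\pm1,2\}$; the subsequent drop from $n$ strands to $n-1$ strands for $k=\pm1$ is a Markov move, but for $k=0$ it is (IO) and for $k=2$ it is~($\Phi$). Both of the latter strictly decrease the number of components, so they are \emph{not} confined to terminal unlinks as you hope. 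For knots this is harmless, since homogeneity forces $k\in\{\pm1\}$; but for $\mu>1$ the even values of $k$ genuinely occur once you reach a pure $\mu$-braid, and your scheme then has no way to proceed while staying inside the class of $\mu$-component links.

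The paper closes this gap with an ingredient you do not mention: Lemma~\ref{lem:stabilization3}, derived from (II), (III) and isotopy invariance alone, gives the component-preserving identity
\[
(1+z^2-a^2)\cdot P\bigl([\beta\gamma^{\triangleright n}]_{n+p}^{\;\wh{}}\,\bigr)
=(a^{-2}-1)\cdot P\bigl([\beta\sigma_n^2\gamma^{\triangleright n}]_{n+p}^{\;\wh{}}\,\bigr).
\]
This replaces ($\Phi$) when $k=2$, at the price of a factor $(a^{-2}-1)^{-1}$; the $k=0$ case is handled simply by reinterpreting the free $n$-th strand as one more idle strand (increment $p$). One must therefore work temporarily in $\wt\Lambda=\Lambda[(a^{-2}-1)^{-1}]$ and keep track of the $(a^{-2}-1)^{-1}$-exponent; the point of the normalization (U) is precisely that $P(U_\mu)=\bigl(z^{-1}(a^{-1}-a)\bigr)^{\mu-1}$ carries a factor $(a^{-2}-1)^{\mu-1}$ that cancels those denominators, pulling the answer back into $\Lambda$. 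Without Lemma~\ref{lem:stabilization3} and this bookkeeping, your outline does not yield a proof for $\mu>1$.
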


\begin{thm}
\label{thm:Jones_knot}
The Jones polynomial $V_K$ is
the invariant of oriented $\mu$-component links
determined uniquely by the following three axioms.
{\allowdisplaybreaks
\begin{gather*}
t^{-2}\cdot V
\begin{pmatrix}
\includegraphics[width=.8cm]{fig_012/1a1a_dd-0}
\end{pmatrix}
+t^2\cdot V
\begin{pmatrix}
\includegraphics[width=.8cm]{fig_012/1b1b_dd-0}
\end{pmatrix}
=(t+t^{-1})
\cdot V
\begin{pmatrix}
\includegraphics[width=.8cm]{fig_012/1oo_dd-0}
\end{pmatrix} ;
\tag*{\rm(II$_V$)}
\\
t^{-1}\cdot V
\begin{pmatrix}
\includegraphics[width=1.2cm]{fig_3icons/1a2a1b_ddd-0}
\end{pmatrix}
-t\cdot V
\begin{pmatrix}
\includegraphics[width=1.2cm]{fig_3icons/1a2b1b_ddd-0}
\end{pmatrix}
=
t^{-1}\cdot V
\begin{pmatrix}
\includegraphics[width=1.2cm]{fig_3icons/1b2a1a_ddd-0}
\end{pmatrix}
-t\cdot V
\begin{pmatrix}
\includegraphics[width=1.2cm]{fig_3icons/1b2b1a_ddd-0}
\end{pmatrix} ;
\tag*{\rm(III$_V$)}
\\[2ex]
V(U_\mu) = (-(t^{\frac12}+t^{-\frac12}))^{\mu-1} .
\tag*{\rm(U$_V$)}
\end{gather*}
}
\end{thm}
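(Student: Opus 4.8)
The plan is to establish existence and uniqueness separately, the latter being the substantive part; the argument runs parallel to the one for the skein polynomial in Theorem~\ref{thm:HOMFLY_knot}, with the caveat (noted before Theorem~\ref{thm:Jones}) that every identity must be checked directly over $\mathbb Z[t^{\pm\frac12}]$ rather than specialized from the $(a,z)$ case. For existence I would verify that the genuine Jones polynomial satisfies the three axioms. Axiom (U$_V$) is the classical value of $V$ on the $\mu$-component unlink, obtained by repeatedly splitting off an unknot. Axioms (II$_V$) and (III$_V$) are local and follow from the ordinary Jones skein relation $t^{-1}V(L_+)-tV(L_-)=(t^{\frac12}-t^{-\frac12})V(L_0)$: applying it to $\sigma^{2}$, to $\sigma^{-2}$, and to a single crossing, and then eliminating the two single-crossing terms, collapses exactly to (II$_V$); while resolving the central crossing of each three-strand word reduces both sides of (III$_V$) to $(t^{\frac12}-t^{-\frac12})\,V$ of a common smoothed tangle, so they agree.

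For uniqueness I would let $V$ be \emph{any} invariant of oriented $\mu$-component links satisfying the three axioms and show that its value is forced on every such link. Present the link as the closure of a braid $\beta\in B_n$ (Alexander's theorem) and argue that (II$_V$) and (III$_V$), together with the Markov moves (conjugation and stabilization, under which $V$ is invariant because it is a link invariant and which preserve the number of components), reduce the computation of $V(\widehat\beta)$ to $V(U_\mu)$, which is then pinned down by (U$_V$). The two skein axioms play complementary roles: (III$_V$) is an exchange relation that transports the central crossing past an outer generator (for an abstract $V$ it is a genuine constraint, even though for the true polynomial both sides coincide with a smoothed value), and (II$_V$) is the quadratic relation that rewrites a squared generator $\sigma_i^{2}$ as a $\mathbb Z[t^{\pm\frac12}]$-combination of the identity tangle (two fewer crossings) and $\sigma_i^{-2}$. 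Crucially, every diagram occurring in (II$_V$) and (III$_V$) has the same strand connectivity as the original, so the reduction never leaves the class of $\mu$-component links and never invokes a smoothing that would change $\mu$; this is exactly why the single normalization (U$_V$) can replace the pair (IO$_V$),(O$_V$) used in Theorem~\ref{thm:Jones}.

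The step I expect to be the main obstacle is proving that this smoothing-free reduction actually reaches $U_\mu$ and terminates. The difficulty is that (II$_V$) trades $\sigma_i^{2}$ for $\sigma_i^{-2}$ without shortening the braid word, so a naive induction on word length fails and the cruder crossing number is not monotone. I would resolve this with a double induction: an outer induction on the braid index $n$, lowered by Markov destabilization once a strand can be freed, and an inner induction in which (III$_V$) is applied repeatedly---much as the far-commutation and braid relations are used to normalize a word in $B_n$---to gather the generators so that (II$_V$) produces a net simplification rather than a cycle. The combinatorial heart is verifying that (III$_V$) supplies enough exchanges to reach such a normal form while keeping the component count fixed at $\mu$; this is precisely where the three-strand form of the axiom is indispensable, and it is the verbatim analogue, for the Jones normalization, of the corresponding step in Theorem~\ref{thm:HOMFLY_knot}. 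Once $\beta$ is reduced to the trivial braid in $B_\mu$ its closure is $U_\mu$, and (U$_V$) completes the computation.
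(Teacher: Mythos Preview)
Your overall strategy is the paper's: verify the genuine Jones polynomial satisfies the three axioms, then for uniqueness present the link as a closed braid and lower the braid index by induction, using (II$_V$) and (III$_V$) together with conjugacy and Markov moves, until only $U_\mu$ remains. The paper does exactly this, declaring that Theorem~\ref{thm:Jones_knot} is proved ``similarly'' to Theorem~\ref{thm:HOMFLY_knot}.

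Where your outline has a real gap is in the termination argument. The tangle-level reduction you describe is Lemma~\ref{lem:reduction}: modulo (II$_V$) and (III$_V$), every $\beta\in B_n$ becomes a $\mathbb Z[t^{\pm\frac12}]$-combination of words $\alpha\sigma_{n-1}^{k}\gamma$ with $\alpha,\gamma\in B_{n-1}$ and $k\in\{0,\pm1,2\}$. For $k=\pm1$ one Markov-destabilizes, for $k=0$ one has a free last strand; but for $k=2$ your plan stalls. At that point there is no further occurrence of $\sigma_{n-1}$ in $\alpha$ or $\gamma$ for (III$_V$) to ``gather'', and (II$_V$) only swaps $\sigma_{n-1}^{2}$ for $\sigma_{n-1}^{-2}$, so the inner induction you propose cycles. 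The paper breaks this not at the tangle level but at the level of \emph{closed} braids: Lemma~\ref{lem:stabilization3} combines (II$_V$), (III$_V$) and isotopies (conjugacy, Markov, slide) of closures to obtain
\[
(t^{-2}-1)\cdot V\bigl([\gamma\alpha\,\sigma_{n-1}^{2}]_{n+p}^{\;\wh{}}\bigr)
=\bigl(t^{-1}+t-1-t^{2}\bigr)\cdot V\bigl([\gamma\alpha]_{(n-1)+(p+1)}^{\;\wh{}}\bigr),
\]
which does lower the braid index in the $k=2$ case. A second point you pass over is that the resulting coefficient $\dfrac{t(t^{2}+1)}{t+1}$ is not a Laurent polynomial; one must temporarily allow denominators $(t+1)^{j}$, track that the exponent $j$ never exceeds the number $p'$ of accumulated free strands (as in Lemma~\ref{lem:reduction_HOMFLY}), and then observe that at the end $V(U_\mu)=(-1)^{\mu-1}t^{-(\mu-1)/2}(t+1)^{\mu-1}$ supplies exactly the factor of $(t+1)^{\mu-1}$ needed to clear them. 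Your sketch identifies the right induction but misses both the closed-braid identity that handles $\sigma_{n-1}^{2}$ and this denominator bookkeeping.
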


\begin{thm}
\label{thm:Alexander-Conway_knot}
The Alexander-Conway polynomial $\Delta_K$ is
the invariant of oriented $\mu$-component links
determined uniquely by the following three axioms.
{\allowdisplaybreaks
\begin{gather*}
\Delta
\begin{pmatrix}
\includegraphics[width=.8cm]{fig_012/1a1a_dd-0}
\end{pmatrix}
+ \Delta
\begin{pmatrix}
\includegraphics[width=.8cm]{fig_012/1b1b_dd-0}
\end{pmatrix}
=(t+t^{-1})
\cdot \Delta
\begin{pmatrix}
\includegraphics[width=.8cm]{fig_012/1oo_dd-0}
\end{pmatrix} ;
\tag*{\rm(II$_\Delta$)}
\\
\Delta
\begin{pmatrix}
\includegraphics[width=1.2cm]{fig_3icons/1a2a1b_ddd-0}
\end{pmatrix}
- \Delta
\begin{pmatrix}
\includegraphics[width=1.2cm]{fig_3icons/1a2b1b_ddd-0}
\end{pmatrix}
=
\Delta
\begin{pmatrix}
\includegraphics[width=1.2cm]{fig_3icons/1b2a1a_ddd-0}
\end{pmatrix}
- \Delta
\begin{pmatrix}
\includegraphics[width=1.2cm]{fig_3icons/1b2b1a_ddd-0}
\end{pmatrix} ;
\tag*{\rm(III$_\Delta$)}
\\
\Delta(C_\mu) = (t^{\frac12}-t^{-\frac12})^{\mu-1} .
\tag*{\rm(C$_\Delta$)}
\end{gather*}
}
\end{thm}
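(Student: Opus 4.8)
The plan is to run the argument in parallel with Theorem~\ref{thm:HOMFLY_knot}, since (II$_\Delta$) and (III$_\Delta$) are exactly the specializations $a\mapsto1$ of (II) and (III). As usual the proof has two halves: existence (the Alexander--Conway polynomial satisfies the three axioms) and uniqueness (the three axioms admit no other solution).

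For existence I would derive everything from the Conway skein relation $\Delta(L_+)-\Delta(L_-)=(t^{1/2}-t^{-1/2})\,\Delta(L_0)$ together with $\Delta(\text{unknot})=1$. Applying the skein relation twice at a pair of like crossings between two parallel strands gives (II$_\Delta$), the coefficient being $2+(t^{1/2}-t^{-1/2})^2=t+t^{-1}$; applying it once at the middle crossing of the three-strand pictures gives (III$_\Delta$), since after smoothing that crossing the flanking crossings cancel, so both sides equal $(t^{1/2}-t^{-1/2})\,\Delta$ of one and the same diagram. For (C$_\Delta$) I would induct on $\mu$: resolving one clasp of $C_\mu$ relates it to $C_{\mu-1}$ (the oriented smoothing) and to a split link (the crossing change), the latter having vanishing invariant, whence $\Delta(C_\mu)=(t^{1/2}-t^{-1/2})\,\Delta(C_{\mu-1})=(t^{1/2}-t^{-1/2})^{\mu-1}$.

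For uniqueness, let $\Delta'$ be any invariant of oriented $\mu$-component links satisfying the three axioms. The reduction proving Theorem~\ref{thm:HOMFLY_knot} uses only (II) and (III), so it applies to $\Delta'$ through (II$_\Delta$), (III$_\Delta$) and exhibits the skein module of $\mu$-component links modulo these relations as free of rank one. For the skein and Jones polynomials this module is generated by $[U_\mu]$; but here $\Delta(U_\mu)=0$ for $\mu\ge2$, the class $[U_\mu]$ itself vanishes (cf.\ Corollary~\ref{cor:stabilization_Alexander-Conway}), and the zero invariant also satisfies (II$_\Delta$), (III$_\Delta$) and kills $U_\mu$, so $U_\mu$ cannot normalize. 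Instead $[C_\mu]$ is the generator: every $\mu$-component link satisfies $[L]=\phi_L\,[C_\mu]$ with $\phi_L=\Delta_L/(t^{1/2}-t^{-1/2})^{\mu-1}\in\mathbb{Z}[t^{\pm1/2}]$, the membership being the classical divisibility $(t^{1/2}-t^{-1/2})^{\mu-1}\mid\Delta_L$ of the Conway polynomial. Feeding in (C$_\Delta$) then yields $\Delta'(L)=\phi_L\,\Delta'(C_\mu)=\phi_L\,(t^{1/2}-t^{-1/2})^{\mu-1}=\Delta_L$.

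The main obstacle is precisely this degeneration at $a=1$. In the proof of Theorem~\ref{thm:HOMFLY_knot} the normalizing link $U_\mu$ enters through coefficients built from $z^{-1}(a^{-1}-a)$, which vanishes at $a=1$; one must check that the rank-one reduction survives the specialization with $C_\mu$---rather than the now-degenerate $U_\mu$---as generator. Concretely this means verifying both that $[U_\mu]=0$ for $\mu\ge2$ and that $[C_\mu]$ generates over $\mathbb{Z}[t^{\pm1/2}]$, and it is here that the divisibility $(t^{1/2}-t^{-1/2})^{\mu-1}\mid\Delta_L$ does the essential work, guaranteeing that the reduction coefficient $\phi_L$ is an honest Laurent polynomial and that the single nonzero value $\Delta(C_\mu)=(t^{1/2}-t^{-1/2})^{\mu-1}$, a non-zero-divisor, determines $\Delta'$ outright.
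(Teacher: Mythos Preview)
Your existence half is fine, and you have correctly spotted that the specialization $a\mapsto1$ kills the coefficient $z^{-1}(a^{-1}-a)$ and makes $U_\mu$ useless as a normalizer. But your uniqueness argument has a genuine gap at exactly this point.

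You assert that the reduction behind Theorem~\ref{thm:HOMFLY_knot} ``uses only (II) and (III)'' and hence specializes. It does not: in Lemma~\ref{lem:reduction_HOMFLY} the case $k=2$ is handled by Lemma~\ref{lem:stabilization3}, which solves for $P\bigl([\alpha\sigma_{n-1}^2\gamma]^{\;\wh{}}\bigr)$ at the cost of a factor $(a^{-2}-1)^{-1}$. At $a=1$ that step is a division by zero, so the reduction to $U_\mu$ does not survive, and you have no reduction to $C_\mu$ in its place.

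Your attempted patch is to set $\phi_L=\Delta_L/(t^{1/2}-t^{-1/2})^{\mu-1}$ and claim $[L]=\phi_L\,[C_\mu]$ in the skein module. But the classical divisibility only gives $\Delta(L)=\phi_L\,\Delta(C_\mu)$, an equality of \emph{values} of one particular invariant; it does not give an equality of \emph{skein classes}. Passing from the former to the latter would require knowing that $\Delta$ separates classes modulo (II$_\Delta$) and (III$_\Delta$), which is precisely the uniqueness you are trying to prove. So the argument is circular.

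The paper closes the gap differently, and purely within the axioms. It carries a chain braid $\delta_p=\sigma_1^2\cdots\sigma_{p-1}^2$ alongside $\beta$ throughout the reduction (Lemma~\ref{lem:reduction_Alexander-Conway}): when the exponent $k=2$ appears, the factor $\sigma_n^2$ is not eliminated but \emph{absorbed} into $\delta_{p+1}$, while $k=0$ produces a split link, which vanishes by Corollary~\ref{cor:stabilization_Alexander-Conway} (itself a consequence of (II$_\Delta$) and (III$_\Delta$) alone). Iterating drives $n$ down to $0$ and forces $p=\mu$, so every $\Delta'$ satisfying the axioms is expressed as a $\mathbb Z[t^{\pm1/2}]$-combination of $\Delta'(C_\mu)$. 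No external divisibility is needed; in fact the divisibility $(t^{1/2}-t^{-1/2})^{\mu-1}\mid\Delta_L$ is recovered as a by-product.
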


Note that the foundational relation (I) cannot appear in
Theorems~\ref{thm:HOMFLY_knot}--\ref{thm:Alexander-Conway_knot}
because it involves links with different number of components.

Our approach is via closed braids.
We explain the language of relators in Section~\ref{sec:relators} and
give an algebraic reduction lemma in Section~\ref{sec:key_lem}.
This approach is adapted from the corresponding sections of \cite{J1}
on Conway's potential function for colored links.
The current context of uncolored links makes the reduction argument more transparent.
Section~\ref{sec:stabilizations} discusses closed braids with different number of strands.
The theorems are proved in the last two sections.

\section{Braids and skein relators}
\label{sec:relators}

For braids, we use the following conventions:
Braids are drawn from top to bottom.
The strands of a braid are numbered at the top of the braid, from left to right.
The product $\beta_1\cdot\beta_2$ of two $n$-braids is obtained by drawing $\beta_2$ below $\beta_1$.
The set $B_n$ of all $n$-braids forms a group under this multiplication,
with standard generators $\sigma_1,\sigma_2,\dots,\sigma_{n-1}$.

It is well known that links can be presented as closed braids.
The closure of a braid $\beta\in B_n$ will be denoted $\wh\beta$.
Two braids (possibly with different number of strands) have isotopic closures
if and only if they can be related by a finite sequence of two types of moves:
\begin{enumerate}
\item Conjugacy move: $\beta$ $\leftrightsquigarrow$ $\beta'$ where $\beta,\beta'$ are conjugate in a braid group $B_n$;
\item Markov move: $\beta\in B_n$ $\leftrightsquigarrow$ $\beta\sigma_n^{\pm1}\in B_{n+1}$.
\end{enumerate}

Let $\Lambda$ be the Laurent polynomial ring $\mathbb Z[a^{\pm1},z^{\pm1}]$.
Let $\Lambda B_n$ be the group-algebra on $B_n$ with coefficients in $\Lambda$.

\begin{defn}
\label{defn:skein_relation}
We say that an element
\[
\lambda_1\cdot\beta_1+\dots+\lambda_k\cdot\beta_k 
\]
of $\Lambda B_n$ is a \emph{skein relator}, or equivalently, say that the corresponding formal equation
(in which $P_{L_{\beta_h}}$ stands for the $P$ of the link $L_{\beta_h}$)
\[
\lambda_1\cdot P_{L_{\beta_1}}+\dots+\lambda_k\cdot P_{L_{\beta_k}}=0
\]
is a \emph{skein relation}, if the following condition is satisfied:
For any links $L_{\beta_1},\dots,L_{\beta_k}$ that are identical except in a cylinder
where they are represented by the braids $\beta_1,\dots,\beta_k$ respectively,
the formal equation becomes an equality in $\Lambda$.
\end{defn}

\begin{exam}
\label{exam:relator_vs_relation}
To every element $\lambda_1\cdot\beta_1+\dots+\lambda_k\cdot\beta_k\in \Lambda B_n$,
by taking braid closures we have a corresponding element
\[
\lambda_1\cdot P_{\wh\beta_1}+\dots +\lambda_k\cdot P_{\wh\beta_k} \in \Lambda.
\]
The latter vanishes if the former is a skein relator.
\end{exam}

\begin{exam}
The skein relations (I), (II) and (III) in Section~\ref{sec:Intro}
correspond to the following relators, respectively:
(The symbol $e$ stands for the trivial braid.)
{\allowdisplaybreaks
\begin{gather*}
\text{\rm(I$_\text{B}$)}:=
a^{-1}\cdot\sigma_1^2 -a\cdot\sigma_1^{-2} -z\cdot{e} ;
\\
\text{\rm(II$_\text{B}$)}:=
a^{-2}\cdot\sigma_1^2 +a^2\cdot\sigma_1^{-2} -(2+z^2)\cdot{e} ;
\\
\text{\rm(III$_\text{B}$)}:=
\begin{aligned}[t]
&a^{-1}\cdot{\sigma_1\sigma_2\sigma_1^{-1}}
+ a\cdot{\sigma_1^{-1}\sigma_2^{-1}\sigma_1}
\\
&- a^{-1}\cdot{\sigma_1^{-1}\sigma_2\sigma_1}
- a\cdot{\sigma_1\sigma_2^{-1}\sigma_1^{-1}} .
\end{aligned}
\end{gather*}
}
\end{exam}


\begin{prop}
\label{prop:relator_ideal}
Assume that
\[
\lambda_1\cdot P_{L_{\beta_1}}+\dots+\lambda_k\cdot P_{L_{\beta_k}}=0
\]
is a skein relation. Then for any given braid $\alpha\in B_n$,
the following equations are also skein relations:
\begin{gather*}
\lambda_1\cdot P_{L_{(\beta_1\alpha)}}+\dots+\lambda_k\cdot P_{L_{(\beta_k\alpha)}}=0;
\\
\lambda_1\cdot P_{L_{(\alpha\beta_1)}}+\dots+\lambda_k\cdot P_{L_{(\alpha\beta_k)}}=0.
\end{gather*}

Hence skein relators form a two-sided ideal\/ $\mathfrak R_n$
(called the \emph{relator ideal\/})
in $\Lambda B_n$.
\end{prop}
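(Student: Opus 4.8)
The plan is to argue geometrically, exploiting the observation that inserting the longer braid $\beta_h\alpha$ into a cylinder is the same as inserting $\alpha$ once and for all (as part of a \emph{fixed} surrounding diagram) and then inserting the shorter braid $\beta_h$ into a sub-cylinder. This reduces each of the two displayed statements to a direct application of the hypothesis.

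First I would make the defining condition functorial. For a fixed \emph{exterior} $E$ --- meaning a choice of the link diagram outside a cylinder $C$ together with how the $2n$ endpoints on $\partial C$ are joined up --- inserting a braid $\gamma\in B_n$ into $C$ produces a link $E(\gamma)$, and the assignment $\gamma\mapsto P_{E(\gamma)}$ extends $\Lambda$-linearly to a map $\phi_E\colon\Lambda B_n\to\Lambda$ (well defined because $B_n$ is a $\Lambda$-basis of $\Lambda B_n$). By Definition~\ref{defn:skein_relation}, an element $r\in\Lambda B_n$ is a skein relator precisely when $\phi_E(r)=0$ for every exterior $E$; equivalently, the skein relators are exactly $\bigcap_E\ker\phi_E$. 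In particular they already form a $\Lambda$-submodule of $\Lambda B_n$, so closure under addition and $\Lambda$-scaling needs no further work.

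Next I would prove the right-multiplication statement. Suppose links $M_1,\dots,M_k$ agree outside a cylinder $C$ in which they are represented by $\beta_1\alpha,\dots,\beta_k\alpha$. Since the product convention draws $\alpha$ below $\beta_h$, the cylinder splits into an upper sub-cylinder carrying $\beta_h$ and a lower sub-cylinder carrying the common braid $\alpha$. Shrinking $C$ to the upper sub-cylinder $C'$, the links $M_h$ now agree outside $C'$ --- the lower $\alpha$-portion has been reassigned to the fixed surrounding diagram --- and inside $C'$ they are represented by $\beta_1,\dots,\beta_k$. The hypothesis that $\lambda_1\cdot\beta_1+\dots+\lambda_k\cdot\beta_k$ is a skein relator then yields $\sum_h\lambda_h P_{M_h}=0$, so $\lambda_1\cdot\beta_1\alpha+\dots+\lambda_k\cdot\beta_k\alpha$ is a skein relator. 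The left-multiplication statement is identical after interchanging the roles of the upper and lower sub-cylinders, since $\alpha\beta_h$ places the common braid $\alpha$ on top.

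Finally, for the ideal statement I would combine the two. The skein relators form a $\Lambda$-submodule that is closed under both left and right multiplication by every group element $\alpha\in B_n$; since $B_n$ spans $\Lambda B_n$ over $\Lambda$, extending by $\Lambda$-bilinearity gives closure under multiplication by an arbitrary element of $\Lambda B_n$ on either side, so $\mathfrak R_n$ is a two-sided ideal. I expect the only real care to be needed in the geometric reinterpretation: one must check that the splitting of the cylinder into upper and lower sub-cylinders is legitimate and respects the braid-product convention, so that the $\alpha$-portion is genuinely common to all the $M_h$ and may be absorbed into the exterior. Everything else is bookkeeping and linearity.
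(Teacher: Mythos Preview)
Your proof is correct and follows essentially the same approach as the paper: both arguments split the cylinder into an upper and a lower sub-cylinder, absorb the common $\alpha$-portion into the fixed exterior, and then apply the defining property of a skein relator to the remaining sub-cylinder. Your version adds the explicit functorial packaging via the maps $\phi_E$ and spells out the $\Lambda$-linearity, but the underlying geometric idea is identical.
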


\begin{proof}
Look at the cylinder where the links $L_{(\beta_1\alpha)},\dots,L_{(\beta_k\alpha)}$
are represented differently by braids $\beta_1\alpha,\dots,\beta_k\alpha$, respectively.
In the upper half cylinder they are represented by braids $\beta_1,\dots,\beta_k$.
So the assumption implies the first equality.
Similarly for the second equality.

By Definition~\ref{defn:skein_relation}, this means skein relators form a two-sided ideal.
\end{proof}

\section{An algebraic reduction lemma}
\label{sec:key_lem}

\begin{defn}
\label{defn:equivalence}
Let $\mathfrak I_n$ be the two-sided ideal in $\Lambda B_n$
generated by $\text{\rm(II$_\text{B}$)}$ and $\text{\rm(III$_{\text{B}}$)}$.
(When $n=2$ we ignore $\text{\rm(III$_{\text{B}}$)}$.)

Two elements of the algebra $\Lambda B_n$ are \emph {equivalent modulo $\mathfrak I_n$}
(denoted by $\sim$\,) if
their difference is in $\mathfrak I_n$.
\end{defn}

For example, by conjugation in $B_n$ we have
$a^{-2}\cdot \sigma_i^2 +a^2\cdot \sigma_i^{-2} -(2+z^2)\cdot e \sim 0$ and
$a^{-1}\cdot\sigma_i\sigma_{i+1}\sigma_i^{-1}
+a\cdot \sigma_i^{-1}\sigma_{i+1}^{-1}\sigma_i
-a^{-1}\cdot \sigma_i^{-1}\sigma_{i+1}\sigma_i
-a\cdot \sigma_i\sigma_{i+1}^{-1}\sigma_i^{-1}
\sim 0$,
for any $i$.

\begin{lem}
\label{lem:reduction}
Modulo $\mathfrak I_n$, every braid $\beta\in B_n$ is
equivalent to a $\Lambda$-linear combination of braids of the form
$\alpha\sigma_{n-1}^k \gamma$ with $\alpha,\gamma\in B_{n-1}$ and $k\in\{0,\pm1,2\}$.
\end{lem}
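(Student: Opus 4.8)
The plan is to induct on $n$ and to localize the whole problem to the rank-two subgroup generated by $s:=\sigma_{n-2}$ and $t:=\sigma_{n-1}$. Throughout I regard $B_{n-1}=\langle\sigma_1,\dots,\sigma_{n-2}\rangle\le B_n$ and note that $t$ commutes with every element of $B_{n-2}=\langle\sigma_1,\dots,\sigma_{n-3}\rangle$, so its only nontrivial interaction inside $B_{n-1}$ is with $s$. Put
\[
V:=\Lambda B_{n-1}+\sum_{k\in\{\pm1,2\}}\Lambda B_{n-1}\,t^{k}\,\Lambda B_{n-1}\ \subseteq\ \Lambda B_n .
\]
Since $e\in V$ and $V$ is visibly closed under left multiplication by $\Lambda B_{n-1}$, and since a left $\Lambda B_n$-submodule containing $e$ must equal $\Lambda B_n$, the Lemma is equivalent to the single closure statement $t^{\pm1}V\subseteq V$ modulo $\mathfrak I_n$. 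Unwinding the definition of $V$, this amounts to $t^{\varepsilon}\alpha\,t^{k}\in V$ for all $\alpha\in B_{n-1}$, $\varepsilon\in\{\pm1\}$ and $k\in\{\pm1,2\}$, the material to the left of $t^{\varepsilon}$ and right of $t^{k}$ being absorbed harmlessly into the flanking $\Lambda B_{n-1}$-factors.

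Next I would feed in the inductive hypothesis. The Lemma for $B_{n-1}$ (with distinguished generator $s$) rewrites $\alpha\equiv\sum_i b_i\,s^{k_i}c_i$ modulo $\mathfrak I_{n-1}$, with $b_i,c_i\in B_{n-2}$ and $k_i\in\{0,\pm1,2\}$; as $\mathfrak I_{n-1}\subseteq\mathfrak I_n$ this congruence also holds modulo $\mathfrak I_n$. Because $b_i,c_i$ commute with $t$, the target reduces to finitely many two-generator words $t^{\varepsilon}s^{k'}t^{k}$ with $\varepsilon\in\{\pm1\}$, $k\in\{\pm1,2\}$, $k'\in\{0,\pm1,2\}$. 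The case $k'=0$ gives the single block $t^{\varepsilon+k}$, and iterating $(\mathrm{II}_{\mathrm B})$ collapses any single power $t^{m}$ to a $\Lambda$-combination of $t^{m'}$ with $m'\in\{0,\pm1,2\}$; this also settles the base case $n=2$.

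The heart of the argument is the $s$–$t$ interaction. Using only the braid relation $sts=tst$ one checks that the four single-$s$ words $t^{\varepsilon}s^{\pm1}t^{-\varepsilon}$ with opposite outer exponents, together with the two aligned words $tst$ and $t^{-1}s^{-1}t^{-1}$, all collapse to a single $t$-block; similarly the opposite-sign double words reduce by the clean identity $t^{-1}s^{2}t=s\,t^{2}s^{-1}$ and its sign-variants. What the braid relation cannot reach are the two same-outer-sign words $ts^{-1}t$ and $t^{-1}st^{-1}$, and this is exactly where $(\mathrm{III}_{\mathrm B})$ enters. Writing $R$ for $(\mathrm{III}_{\mathrm B})$ with $(\sigma_1,\sigma_2)$ replaced by $(s,t)$, the element $tR\in\mathfrak I_n$ braid-reduces to a relation I can solve for the bad word:
\[
ts^{-1}t\ \equiv\ sts^{-1}+a^{2}\,s^{-1}t^{-1}s-a^{2}\,s^{-1}t\,s^{-1}\pmod{\mathfrak I_n},
\]
every term of which is a single $t$-block; the mirror word $t^{-1}st^{-1}$ is disposed of by the symmetric one-sided multiple of $(\mathrm{III}_{\mathrm B})$. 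The two same-sign double words $ts^{2}t,\,t^{-1}s^{2}t^{-1}$ are then handled by combining $(\mathrm{II}_{\mathrm B})$ (to trade $s^{2}$ for $s^{-2}$ and $e$) with the factorization $ts^{-2}t=(ts^{-1}t^{-1})(ts^{-1}t)$ and the displayed identity.

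The step I expect to be the main obstacle is not any single identity but the bookkeeping that makes the recursion terminate. I would measure a word by its number of maximal $t$-blocks and show that any word with at least two blocks can be shortened. Given two adjacent $t$-blocks, the intervening factor lies in $B_{n-1}$; applying the inductive normal form to it (and expanding into a $\Lambda$-combination) I may assume the connector is a single power $s^{k'}$ with $k'\in\{0,\pm1,2\}$, the $B_{n-2}$-part having been commuted into the neighbouring blocks. Peeling the two facing block-exponents down to $t^{\pm1}$, the local picture is $t^{\varepsilon}s^{k'}t^{\delta}$ with $\varepsilon,\delta\in\{\pm1\}$, which by the previous paragraph merges into single $t$-blocks, strictly lowering the block count; the only non-braid inputs are the displayed $(\mathrm{III}_{\mathrm B})$-identity for the same-facing single-$s$ case and $(\mathrm{II}_{\mathrm B})$ for the same-facing $s^{2}$ case. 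Since each merge re-normalizes its connector back into $\{0,\pm1,2\}$, no $s$-power ever escapes this finite range, so the block count cannot increase and must reach one. A final round of $(\mathrm{II}_{\mathrm B})$ brings the surviving power into $\{0,\pm1,2\}$, placing every $t^{\varepsilon}s^{k'}t^{k}$—and hence all of $\Lambda B_n$—inside $V$ modulo $\mathfrak I_n$, which is the Lemma.
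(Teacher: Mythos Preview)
Your strategy matches the paper's: induct on $n$, localize to the rank-two subgroup $\langle s,t\rangle=\langle\sigma_{n-2},\sigma_{n-1}\rangle$, and reduce to a finite list of two-generator words. Your module-theoretic reformulation via $t^{\pm1}V\subseteq V$ is a clean way to package the paper's induction on $r(\beta)$, and your key identity for $ts^{-1}t$ (obtained from $tR$) is correct and equivalent, modulo $R$ itself, to the paper's version coming from $s^{-1}Rt$.

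There is, however, a genuine gap in your termination argument. You claim that merging two adjacent $t$-blocks ``strictly lower[s] the block count'', but this fails once a block carries exponent $2$. Take $t^{2}s^{k'}t^{2}$: peeling one $t$ off each side and merging the inner part gives $t\bigl(\sum s^{p}t^{q}s^{r}\bigr)t$, and a term with $p,r\neq0$ now has \emph{three} $t$-blocks, not one. Equivalently, in your $V$-framework you must check $t^{\varepsilon}s^{k'}t^{k}\in V$ for $k\in\{0,\pm1,2\}$, but your case analysis only covers both outer exponents in $\{\pm1\}$; the cases $k=2$ are deferred to the block-count recursion, which does not terminate as stated. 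The remark that ``no $s$-power ever escapes $\{0,\pm1,2\}$'' is true but does not prevent the $t$-block count from growing.

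The paper repairs this with a different inner induction: in its Lemma (your $t^{k}s^{\ell}t^{m}$ situation) it normalizes the \emph{first} exponent to $k\in\{1,2,3\}$ via (II$_{\mathrm B}$) and writes $t^{k}s^{\ell}t^{m}=t^{k-1}\bigl(ts^{\ell}t^{m}\bigr)$. The bracketed $k=1$ case is exactly your finite list and reduces to single-block words $\sum s^{a}t^{b}s^{c}$; substituting back yields terms $t^{k-1}s^{a}t^{b}s^{c}$ with strictly smaller first exponent $k-1$, and the trailing $s^{c}$ is harmlessly absorbed on the right. Two iterations reach $k=1$. Alternatively, you can simply add the six remaining cases $t^{\pm1}s^{k'}t^{2}$ (for $k'\in\{\pm1,2\}$) to your explicit verification; each follows from your established identities by one more substitution, e.g.\ $tst^{2}=(sts)t=s(sts)=s^{2}ts$ and $ts^{-1}t^{2}=(ts^{-1}t)t$ followed by your displayed congruence for $ts^{-1}t$.
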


A braid $\beta\in B_n$ can be written as
\[
\beta=\beta_0\sigma_{n-1}^{k_1} \beta_1\sigma_{n-1}^{k_2} \dots \sigma_{n-1}^{k_r}\beta_r
\]
where $\beta_j\in B_{n-1}$ and $k_j\neq0$.
We allow that $\beta_0$ and $\beta_r$ be trivial,
but assume other $\beta_j$'s are nontrivial.
The number $r$ will be denoted as $r(\beta)$.

The lemma will be proved by an induction on the double index $(n,r)$.
Note that the lemma is trivial when $n=2$, or $r(\beta)\leq1$.

It is enough to consider the case $r=2$, because induction on $r$ works beyond $2$.
Indeed, if $r(\beta)>2$, let $\beta'=\beta_1\sigma_{n-1}^{k_2} \dots \sigma_{n-1}^{k_r}\beta_r$, then
$r(\beta')<r(\beta)$. By inductive hypothesis $\beta'$ is equivalent to a
linear combination of elements of the form $\alpha'\sigma_{n-1}^{k'} \gamma'$,
hence $\beta$ is equivalent to a linear combination of elements of the form
$\beta_0\sigma_{n-1}^{k_1} \alpha'\sigma_{n-1}^{k'} \gamma'$.
This brings the problem back to the $r=2$ case.
Henceforth we assume $r=2$.

Since the initial and terminal part of $\beta$, namely $\beta_0$ and $\beta_r$,
do not affect the conclusion of the lemma, we can drop them.
So we assume $\beta=\sigma_{n-1}^{k_1} \beta_1\sigma_{n-1}^{k_2}$, where $\beta_1\in B_{n-1}$.

By the induction hypothesis on $n$, $\beta_1\in B_{n-1}$ is a linear combination of
elements of the form $\alpha_1\sigma_{n-2}^\ell \gamma_1$.
Note that $\alpha_1,\gamma_1\in B_{n-2}$ commute with $\sigma_{n-1}$.
So it suffices to focus on braids of the form
$\beta=\sigma_{n-1}^{k}\sigma_{n-2}^{\ell}\sigma_{n-1}^{m}$.

For the sole purpose of controlling the length of displayed formulas,
we assume $n=3$ below.
The proof for a general $n$ can be obtained by a simple change of subscripts,
replacing $\sigma_1,\sigma_2$ with $\sigma_{n-2},\sigma_{n-1}$ and
replacing $t_1,t_2,t_3$ with $t_{n-2},t_{n-1},t_{n}$, respectively.

Thus, Lemma~\ref{lem:reduction} has been reduced to the following
\begin{lem}
\label{lem:key_reduction}
Every $\sigma_2^{k}\sigma_1^{\ell}\sigma_2^{m}$ is equivalent \textup{(modulo $\mathfrak I_n$)} to a linear combination
of braids of the form $\sigma_1^{k'}\sigma_2^{\ell'}\sigma_1^{m'}$ where $\ell'$ is $0$, $\pm1$ or $2$.
\end{lem}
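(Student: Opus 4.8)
The plan is to work entirely in the quotient $\Lambda B_3/\mathfrak I_3$ and produce the reduction in three stages, using that $\mathfrak I_3$ is a \emph{two-sided} ideal, so that $(\mathrm{II_B})$, its $\sigma_2$-conjugate $\Delta(\mathrm{II_B})\Delta^{-1}$, and $(\mathrm{III_B})$ may all be multiplied on either side by arbitrary braids while staying in $\mathfrak I_3$. First I would normalize the three exponents. Since $a^{-2}\sigma_i^{2}+a^{2}\sigma_i^{-2}\sim(2+z^2)$, multiplying this relator left and right by $\sigma_i^{\,p}$ lets me rewrite any power $\sigma_i^{\,j}$ as a $\Lambda$-combination of powers with strictly smaller absolute exponent; iterating, every $\sigma_i^{\,j}$ becomes a combination of $\sigma_i^{\,j'}$ with $j'\in\{0,\pm1,2\}$ (even powers collapse to $\{0,2\}$, odd powers to $\{\pm1\}$). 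Applied to each of the three syllables this preserves the shape $\sigma_2^{*}\sigma_1^{*}\sigma_2^{*}$, so an induction on $|k|+|\ell|+|m|$ reduces the problem to the finitely many base words with $k,\ell,m\in\{0,\pm1,2\}$. If any of $k,\ell,m$ is $0$ the word already has target shape, so I may assume all three are nonzero.

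The second stage disposes of the easy flips. When the two outer exponents have opposite signs and absolute value $1$, the braid relation gives the exact identity $\sigma_2^{\mp1}\sigma_1^{\ell}\sigma_2^{\pm1}=(\sigma_2^{\mp1}\sigma_1\sigma_2^{\pm1})^{\ell}=\sigma_1^{\pm1}\sigma_2^{\ell}\sigma_1^{\mp1}$, which is already of target form once I note $\ell\in\{0,\pm1,2\}$. What remains are the genuinely hard base words: those with same-sign outer exponents (and, after the first stage, the cases involving an exponent $2$, which reduce to these).

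The third stage is the crux and uses $(\mathrm{III_B})$. Rewriting $(\mathrm{III_B})$ through the braid identities $\sigma_1\sigma_2\sigma_1^{-1}=\sigma_2^{-1}\sigma_1\sigma_2$, etc., turns it into a relation among the four words $\sigma_2^{\pm1}\sigma_1^{\pm1}\sigma_2^{\mp1}$. Left-multiplying this by a \emph{single} $\sigma_2$ and simplifying with the braid relation ($\sigma_2^2\sigma_1\sigma_2^{-1}=(\sigma_2\sigma_1^{-1}\sigma_2)\sigma_1$, while the other three terms become target words) isolates exactly one same-sign word, say $X=\sigma_2\sigma_1^{-1}\sigma_2$, in the form $X\sigma_1\sim(\text{targets})$; right-multiplying by $\sigma_1^{-1}$ and absorbing the factor into the (unrestricted) outer $\sigma_1$-exponent gives $X\sim(\text{targets})$ directly. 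For the even-middle words I would not attack them head-on but instead write them as a product with an already-reduced odd word: for instance $\sigma_2\sigma_1^{-2}\sigma_2=(\sigma_2\sigma_1^{-1}\sigma_2^{-1})\,X$, so substituting the reduction of $X$ and distributing, every resulting product $g\cdot(\text{target})$ collapses by the braid relation (the relevant $\sigma_1^{2}$ now sits between opposite-sign $\sigma_2$'s and flips), yielding targets; then $\sigma_2\sigma_1^{2}\sigma_2$ follows from $\sigma_2\sigma_1^{-2}\sigma_2$ via $\sigma_1^{2}\sim a^{2}(2+z^{2})-a^{4}\sigma_1^{-2}$.

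The main obstacle is precisely the even-middle same-sign word $\sigma_2\sigma_1^{2}\sigma_2$. The danger — which I would flag explicitly — is that the obvious manipulations are circular: multiplying $(\mathrm{II_B})$ for $\sigma_1$ by $\sigma_2$ on both sides, or reducing either outer block of its flipped form $\sigma_1^{-1}\sigma_2^{2}\sigma_1\sigma_2^{2}$, or expanding the central element $\Delta^{2}=\sigma_2\sigma_1^{2}\sigma_2\cdot\sigma_1^{2}$, all reproduce the single underdetermined relation $a^{-2}\sigma_2\sigma_1^{2}\sigma_2+a^{2}\sigma_2\sigma_1^{-2}\sigma_2\sim(2+z^{2})\sigma_2^{2}$; moreover the involution $\sigma_i\mapsto\sigma_i^{-1},\,a\mapsto a^{-1}$ (which preserves $\mathfrak I_3$) carries this relation to itself, so symmetry supplies no second equation and $(\mathrm{III_B})$, being odd in $\sigma_1$, never produces $\sigma_1^{2}$ between two $\sigma_2$'s. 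Breaking this degeneracy is exactly what the two devices above accomplish — exposing a \emph{single} hard word by the one-generator multiplication of $(\mathrm{III_B})$, and then passing to the even sector through the product factorization rather than through $(\mathrm{II_B})$ — and the argument must be arranged so that each such step strictly lowers $|k|+|\ell|+|m|$ so the induction terminates.
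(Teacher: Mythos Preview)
Your approach is essentially the paper's: normalize the exponents via $(\mathrm{II_B})$, then handle the finitely many residual words using braid identities and left/right multiplications of $(\mathrm{III_B})$. The paper organizes this slightly more economically by first peeling the left exponent down to $k=1$ (writing $\sigma_2^{k}\sigma_1^{\ell}\sigma_2^{m}=\sigma_2^{k-1}\bigl(\sigma_2\sigma_1^{\ell}\sigma_2^{m}\bigr)$ and recursing), which reduces your $27$ base words to $9$ and makes the ``exponent-$2$ outer'' cases you leave implicit into a clean one-line induction.

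The one place where the paper is genuinely sharper is the even-middle same-sign word that worried you. Rather than factoring $\sigma_2\sigma_1^{-2}\sigma_2=(\sigma_2\sigma_1^{-1}\sigma_2^{-1})\cdot X$ and distributing, the paper right-multiplies $(\mathrm{III_B})$ by $\sigma_1\sigma_2\sigma_1$ to obtain the single relation
\[
a^{-1}\,\sigma_1\sigma_2^{2}\sigma_1 + a\,\sigma_2^{2} - a^{-1}\,\sigma_2\sigma_1^{2}\sigma_2 - a\,\sigma_1^{2}\ \sim\ 0,
\]
which isolates $\sigma_2\sigma_1^{2}\sigma_2$ in one stroke, with the other three terms already in target form. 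This bypasses entirely the circularity you flag (no detour through $\sigma_2\sigma_1^{-2}\sigma_2$, no invocation of $(\mathrm{II_B})$ on the middle block), and the remaining case $\sigma_2\sigma_1^{2}\sigma_2^{2}$ follows by right-multiplying this same relation by $\sigma_2$. Your factorization route does work, but it is longer and the final sentence about each step ``strictly lowering $|k|+|\ell|+|m|$'' is not literally true for the move $\sigma_2\sigma_1^{2}\sigma_2\rightsquigarrow\sigma_2\sigma_1^{-2}\sigma_2$; you are really doing a finite case analysis there, not an induction on that quantity.
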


\begin{proof}
Modulo $\text{(II$_\text{B}$)}$, we may restrict the exponent
$k$ to take values $1$, $2$ and $3$ (we are done if $k$ is $0$).
If $k>1$ we can decrease $k$ by looking at
$\sigma_2^{k-1}(\sigma_2\sigma_1^{\ell}\sigma_2^m)$, so it suffices to prove the case $k=1$.
Again modulo $\text{(II$_\text{B}$)}$, we can restrict the exponents $\ell, m$ to the values $\pm1$ and $2$.
There are altogether 9 cases to verify.

{\it 5 trivial cases (braid identities) }:
\begin{alignat*}{3}
&\sigma_2\sigma_1\sigma_2=\sigma_1\sigma_2\sigma_1, \quad
&&\sigma_2\sigma_1\sigma_2^{-1}=\sigma_1^{-1}\sigma_2\sigma_1, \quad
&&\sigma_2\sigma_1^{-1}\sigma_2^{-1}=\sigma_1^{-1}\sigma_2^{-1}\sigma_1, \quad
\\
&\sigma_2\sigma_1\sigma_2^2=\sigma_1^2\sigma_2\sigma_1, \quad
&&\sigma_2\sigma_1^2\sigma_2^{-1}=\sigma_1^{-1}\sigma_2^2\sigma_1. \quad
\end{alignat*}

{\it The case $\sigma_2\sigma_1^{-1}\sigma_2$ }:
Multiplying $\text{\rm(III$_{\text{B}}$)}$ by $\sigma_2$ on the right and $\sigma_1^{-1}$ on the left,
and taking braid identities into account, we get the relation
\[
a^{-1}\cdot \sigma_2\sigma_1^{-1}\sigma_2
+ a\cdot \sigma_1^{-1}\sigma_2\sigma_1^{-1}
- a^{-1}\cdot \sigma_1^{-1}\sigma_2\sigma_1
- a\cdot \sigma_1\sigma_2^{-1}\sigma_1^{-1}
\sim 0.
\]
Then $\sigma_2\sigma_1^{-1}\sigma_2$ is equivalent to a linear combination of
braids of the form $\sigma_1^{\pm1}\sigma_2^{\pm1}\sigma_1^{\pm1}$.
So the case $\sigma_2\sigma_1^{-1}\sigma_2$ is verified.

{\it The case $\sigma_2\sigma_1^{-1}\sigma_2^2$ }:
Multiplying the previous relation by $\sigma_2$ on the right,
and taking braid identities into account, we see that
\[
a^{-1}\cdot \sigma_2\sigma_1^{-1}\sigma_2^2
+ a\cdot \sigma_1^{-1}(\sigma_2\sigma_1^{-1}\sigma_2)
- a^{-1}\cdot \sigma_2\sigma_1
- a\cdot \sigma_1^2\sigma_2^{-1}\sigma_1^{-1}
\sim 0.
\]
Similar to the above case, this reduces $\sigma_2\sigma_1^{-1}\sigma_2^2$
to the verified case $\sigma_2\sigma_1^{-1}\sigma_2$.

{\it The case $\sigma_2\sigma_1^2\sigma_2$ }:
Multiplying $\text{\rm(III$_{\text{B}}$)}$ on the right by $\sigma_1\sigma_2\sigma_1$, we get
\[
a^{-1}\cdot \sigma_1\sigma_2^2\sigma_1
+ a\cdot \sigma_2^2
- a^{-1}\cdot \sigma_2\sigma_1^2\sigma_2
- a\cdot \sigma_1^2
\sim 0.
\tag*{$\text{\rm(III$'_{\text{B}}$)}$}
\]
This verifies the case $\sigma_2\sigma_1^2\sigma_2$.

{\it The case $\sigma_2\sigma_1^2\sigma_2^2$ }:
Multiplying $\text{\rm(III$'_{\text{B}}$)}$ by $\sigma_2$ on the right,
we get
\[
a^{-1}\cdot \sigma_1^2\sigma_2\sigma_1^2
+ a\cdot \sigma_2^3
- a^{-1}\cdot \sigma_2\sigma_1^2\sigma_2^2
- a\cdot \sigma_1^2\sigma_2
\sim 0.
\]
The case $\sigma_2\sigma_1^2\sigma_2^2$ is also verified.

We have verified all 9 cases.
Modulo $\text{\rm(II$_\text{B}$)}$ we can assume $\ell'\in\{0,\pm1,2\}$.
Thus Lemma~\ref{lem:key_reduction} is proved.

The inductive proof of Lemma~\ref{lem:reduction} is now complete.
\end{proof}

The resulting $\Lambda$-linear combination of braids of the form
$\alpha\sigma_{n-1}^k \gamma$ with $\alpha,\gamma\in B_{n-1}$ in the Lemma is not unique,
but the inductive proof gives us a recursive algorithm to find one.

To compare the ideal $\mathfrak I_n$ with the relator ideal $\mathfrak R_n$ of Section~\ref{sec:relators}, we have

\begin{prop}
$\mathfrak I_n\subset \mathfrak R_n$ but
$\mathfrak I_n\neq \mathfrak R_n$.
\end{prop}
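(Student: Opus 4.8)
For the inclusion $\mathfrak I_n \subset \mathfrak R_n$, I would show that the generators of $\mathfrak I_n$ are skein relators, and then invoke the ideal property. Concretely, $\mathfrak I_n$ is generated by $\text{(II}_\text{B})$ and $\text{(III}_\text{B})$, so it suffices to verify that each of these is a skein relator in the sense of Definition~\ref{defn:skein_relation}. Now $\text{(III}_\text{B})$ is a direct encoding of axiom (III), which holds for $P$; hence it is a skein relator. The generator $\text{(II}_\text{B})$ encodes axiom (II), which likewise holds for $P$; so it too is a skein relator. Since skein relators form the two-sided ideal $\mathfrak R_n$ (Proposition~\ref{prop:relator_ideal}), and $\mathfrak I_n$ is by definition the two-sided ideal generated by these two relators, we conclude $\mathfrak I_n \subset \mathfrak R_n$. (One should note that for $P$ to actually satisfy (II), one uses that (I) implies (II): squaring and symmetrizing the relation (I) yields (II), so this is a genuine property of the existing invariant $P$ and not something we are assuming as an axiom.)

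For the strict inequality $\mathfrak I_n \neq \mathfrak R_n$, the idea is to exhibit a skein relator that does \emph{not} lie in $\mathfrak I_n$. The natural candidate is the relator $\text{(I}_\text{B}) = a^{-1}\cdot\sigma_1^2 - a\cdot\sigma_1^{-2} - z\cdot e$ itself: it is manifestly a skein relator (it encodes axiom (I)), so it belongs to $\mathfrak R_n$. The claim is that it is \emph{not} in $\mathfrak I_n$. To see this, I would use the reduction machinery: the defining relations $\text{(II}_\text{B})$ and $\text{(III}_\text{B})$ of $\mathfrak I_n$ are all \emph{homogeneous} with respect to a suitable grading, whereas $\text{(I}_\text{B})$ is not. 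Specifically, let me assign to each braid $\beta \in B_n$ the integer $e(\beta)$, its total exponent sum (the image of $\beta$ under the abelianization $B_n \to \mathbb Z$ sending each $\sigma_i \mapsto 1$), and grade $\Lambda B_n$ by exponent sum. The generator $\text{(III}_\text{B})$ has all four braid terms of exponent sum $1$, and its two coefficients carry balanced powers of $a$; the generator $\text{(II}_\text{B})$ has braid terms of exponent sums $2,-2,0$, balanced against coefficients $a^{-2},a^{2},(2+z^2)$. In both generators the quantity $e(\beta) + (\text{$a$-degree of the coefficient})$ is constant across all terms. This invariant is preserved under left/right multiplication by a fixed braid and under multiplication by $\Lambda$-monomials, so every element of $\mathfrak I_n$ is a combination of terms sharing this balance. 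But in $\text{(I}_\text{B})$ the three terms $a^{-1}\sigma_1^2$, $-a\,\sigma_1^{-2}$, $-z\,e$ have values $2+(-1)=1$, $-2+1=-1$, and $0+0=0$, which are \emph{not} all equal. Hence $\text{(I}_\text{B}) \notin \mathfrak I_n$, giving $\mathfrak I_n \neq \mathfrak R_n$.

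The main obstacle is setting up the grading argument cleanly enough that it is clearly \emph{invariant under the whole ideal} and not just the generators. The subtle point is that $\mathfrak I_n$ consists of $\Lambda$-linear combinations $\sum \mu_j \alpha_j g_j \delta_j$ where $g_j$ is a generator and $\mu_j \in \Lambda$, $\alpha_j,\delta_j \in B_n$; I must confirm that each such summand, after expansion, only produces terms lying in a \emph{single} graded piece determined by $\alpha_j, \delta_j, \mu_j$ and the common value of $g_j$. Because left and right multiplication by $\alpha_j,\delta_j$ shift every $e(\beta)$ by the same amount $e(\alpha_j)+e(\delta_j)$, and multiplication by $\mu_j$ shifts the $a$-degree uniformly, the balance $e(\beta)+\deg_a$ is shifted by a common constant on every term of $\alpha_j g_j \delta_j \mu_j$, preserving homogeneity. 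I would state this as: the linear functional $\beta \mapsto e(\beta)$ on $B_n$ extended by $a^{\pm 1}\mapsto \mp 1$, $z \mapsto 0$ defines a $\mathbb Z$-grading on $\Lambda B_n$ under which $\text{(II}_\text{B})$ and $\text{(III}_\text{B})$ are homogeneous but $\text{(I}_\text{B})$ is not, which immediately separates the two ideals.
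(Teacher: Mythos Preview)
Your inclusion argument is fine and matches the paper's in spirit. The paper makes the derivation of (II) and (III) from (I) explicit via
\[
\text{(II}_\text{B}\text{)} = \text{(I}_\text{B}\text{)}^2 + 2z\cdot \text{(I}_\text{B}\text{)}, \qquad
\text{(III}_\text{B}\text{)} = \sigma_2^{-1}\cdot \text{(I}_\text{B}\text{)}\cdot\sigma_2 - \sigma_2\cdot \text{(I}_\text{B}\text{)}\cdot\sigma_2^{-1},
\]
but your appeal to the fact that $P$ satisfies axioms (II) and (III) is equally valid.

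For the strict inequality, however, there is a genuine gap. Your grading by $e(\beta)+\deg_a$ is a legitimate ring grading on $\Lambda B_n$, and both $\text{(II}_\text{B})$ and $\text{(III}_\text{B})$ are indeed homogeneous of degree $0$ for it (a side remark: the four braids in $\text{(III}_\text{B})$ do \emph{not} all have exponent sum $1$ --- two have sum $1$ and two have sum $-1$ --- but the combined degree $e(\beta)+\deg_a$ is $0$ in every term, so your conclusion there survives). The problem is the inference you draw. From ``$\mathfrak I_n$ is generated by homogeneous elements'' you conclude ``every element of $\mathfrak I_n$ is a combination of terms sharing this balance'', and then ``$\text{(I}_\text{B})$ is inhomogeneous, hence $\text{(I}_\text{B})\notin\mathfrak I_n$''. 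That implication is false: a homogeneous ideal certainly contains inhomogeneous elements (e.g.\ $(1+a)\cdot\text{(II}_\text{B})\in\mathfrak I_n$). What homogeneity of $\mathfrak I_n$ actually gives you is that \emph{each homogeneous component} of any element of $\mathfrak I_n$ again lies in $\mathfrak I_n$. So to finish you must isolate a homogeneous component of $\text{(I}_\text{B})$ and show it is not in $\mathfrak I_n$.

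In your grading the degree-$0$ component of $\text{(I}_\text{B})$ is $-z\cdot e$. Now use the inclusion you already proved: if $-z\cdot e\in\mathfrak I_n$ then $-z\cdot e\in\mathfrak R_n$, which would force $-z\cdot P_L=0$ for every link $L$, absurd. This is exactly how the paper closes the argument, except that the paper uses the grading by \emph{underlying permutation} in $\mathfrak S_n$ rather than your numerical grading; in both gradings the offending component is the same $-z\cdot e$, and the final step is identical. So your approach is essentially the paper's with a different (and equally good) choice of grading, but you must add the missing step about $-z\cdot e$.
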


\begin{proof}
The inclusion is easy.
Indeed, $(\text{I}_\text{B})$ is in the relator ideal $\mathfrak R_n$, and
\begin{gather*}
\text{\rm(II$_\text{B}$)}=
\text{\rm(I$_\text{B}$)}^2 +2z\cdot \text{\rm(I$_\text{B}$)} ,
\\
\text{\rm(III$_\text{B}$)}=
\sigma_2^{-1}\cdot \text{\rm(I$_\text{B}$)} \cdot\sigma_2 -\sigma_2\cdot\text{\rm(I$_\text{B}$)} \cdot\sigma_2^{-1} .
\end{gather*}
So both $\text{\rm(II$_\text{B}$)}$ and $\text{\rm(III$_\text{B}$)}$ are in $\mathfrak R_n$.
Therefore $\mathfrak I_n\subset \mathfrak R_n$.

To show they are not equal, we need the notion of homogeneity.
Each $n$-braid $\beta$ has an \emph{underlying permutation} of $\{1,\dots,n\}$, denoted $i\mapsto i^{\beta}$,
where $i^{\beta}$ is the position of the $i$-th strand at the bottom of $\beta$.
In this way the braid group $B_n$ projects onto the symmetric group $\mathfrak S_n$.
An element of $\Lambda B_n$ is called \emph{homogeneous} if all its terms
(with nonzero coefficients) have the same underlying permutation.
As a $\Lambda$-module, $\Lambda B_n$ splits into a direct sum
according to underlying permutations of braids.
Under this splitting, every element of $\Lambda B_n$ decomposes into a sum of its
\emph{homogeneous components}.

Since $\text{\rm(II$_\text{B}$)}$ and $\text{\rm(III$_\text{B}$)}$ are homogeneous,
the ideal $\mathfrak I_n\subset\Lambda B_n$ is generated by homogeneous elements.
Then every homogeneous component of any element of $\mathfrak I_n$ is also in $\mathfrak I_n$.
Now the relator $\text{\rm(I$_\text{B}$)}\in \mathfrak R_n$ has a homogeneous component $-z\cdot e$
which is not a relator.
Hence $\text{\rm(I$_\text{B}$)}$ is not in $\mathfrak I_n$.
Thus $\mathfrak I_n$ is strictly smaller than $\mathfrak R_n$.
\end{proof}

\section{Stabilizations}
\label{sec:stabilizations}

Suppose a braid $\beta\in B_n$ is written as a word in the standard generators
$\sigma_1$, $\sigma_2$, \dots, $\sigma_{n-1}$.
The same word $\beta$ gives a braid in $B_{n+k}$ for any $k\ge0$.
Thus $B_n$ is standardly embedded in $B_{n+k}$.
However, when talking about a closed braid $\wh\beta$, the number of strands in $\beta$ does matter.
We shall use the notation $[\beta]_n$ to emphasize that $\beta$ is regarded as an $n$-braid,
and use $[\beta]_n^{\;\wh{}}$ for its closure.
For example, $[\beta]_{n+1}^{\;\wh{}}$ adds a free circle to $[\beta]_n^{\;\wh{}}$.
The Markov move says $[\beta\sigma_n^{\pm1}]_{n+1}^{\;\wh{}}$ is isotopic to $[\beta]_n^{\;\wh{}}$.

For a braid $\beta\in B_n$ and an integer $k\ge0$, we shall use
$\beta^{\triangleright k} \in B_{n+k}$ to denote the $k$-th shifted version of $\beta$, i.e.,
the braid obtained from the word $\beta$ by replacing each generator $\sigma_i$ with $\sigma_{i+k}$.
Its closure $[\beta^{\triangleright k}]_{n+k}^{\;\wh{}}$ is isotopic to
$[\beta]_{n+k}^{\;\wh{}}$.

Suppose $\beta,\beta'\in B_n$ and $\gamma\in B_p$.
Observe from the diagram defining braid closure that
the closed braid $[\beta\sigma_n^{\pm1} \gamma^{\triangleright n}\beta']_{n+p}^{\;\wh{}}$
is isotopic to $[\beta\gamma^{\triangleright (n-1)}\beta']_{n+p-1}^{\;\wh{}}$
(which is in fact a connected sum of oriented links $[\beta\beta']_n^{\;\wh{}}$ and $[\gamma]_p^{\;\wh{}}$).
By an abuse of language, we will call this a \emph{Markov move}.
If $\beta'$ brings the $n$-th position at its top to the same position at its bottom,
then $[\beta\gamma^{\triangleright (n-1)}\beta']_{n+p-1}^{\;\wh{}}$
is isotopic to $[\beta\beta'\gamma^{\triangleright (n-1)}]_{n+p-1}^{\;\wh{}}$.
We will refer to it as a \emph{slide} move
(in the connected sum,
sliding $[\gamma]_p^{\;\wh{}}$ down the last strand of $\beta'$).

\begin{lem}
\label{lem:stabilization3}
Assume that $P_L \in \mathbb Z[a^{\pm1},z^{\pm1}]$ is
an invariant of oriented links that satisfies skein relations \textup{(II)} and \textup{(III)}.
Then for $\beta\in B_n$ and $\gamma\in B_p$ we have
\[
(1+z^2-a^2)\cdot P\left( [\beta\gamma^{\triangleright n}]_{n+p}^{\;\wh{}} \right) =
(a^{-2}-1)\cdot P\left( [\beta\sigma_{n}^2\gamma^{\triangleright n}]_{n+p}^{\;\wh{}} \right) .
\]
\end{lem}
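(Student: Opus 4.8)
The plan is to pin down the three quantities
\[
X:=P\bigl([\beta\sigma_n^2\gamma^{\triangleright n}]_{n+p}^{\;\wh{}}\bigr),\quad
Y:=P\bigl([\beta\sigma_n^{-2}\gamma^{\triangleright n}]_{n+p}^{\;\wh{}}\bigr),\quad
E:=P\bigl([\beta\gamma^{\triangleright n}]_{n+p}^{\;\wh{}}\bigr)
\]
by two independent skein relations and then to eliminate $Y$. Geometrically $E$ is the split union $[\beta]_n^{\;\wh{}}\sqcup[\gamma]_p^{\;\wh{}}$, while $X$ and $Y$ are these two links joined at the junction strand by a positive, respectively negative, clasp; the lemma asserts a linear relation between the clasped link $X$ and the split link $E$.

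The first relation is immediate from $(\text{II})$. By the remark following Definition~\ref{defn:equivalence}, the conjugate $a^{-2}\sigma_n^2+a^2\sigma_n^{-2}-(2+z^2)e$ of $(\text{II}_{\text B})$ at the junction position $n$ lies in $\mathfrak I_{n+p}\subset\mathfrak R_{n+p}$, hence is a relator for $P$; multiplying it on the left by $\beta$ and on the right by $\gamma^{\triangleright n}$ keeps it a relator (Proposition~\ref{prop:relator_ideal}), and passing to closures (Example~\ref{exam:relator_vs_relation}) gives
\[
a^{-2}X+a^2Y=(2+z^2)E. \tag{1}
\]
Granting also the second relation
\[
a^{-1}X+aY=(a+a^{-1})E, \tag{2}
\]
the lemma is pure algebra: the combination $(1)-a\cdot(2)$ cancels the $Y$-terms and yields $(a^{-2}-1)X=(1+z^2-a^2)E$, which is the assertion.

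The heart of the matter, and the step I expect to be the main obstacle, is establishing $(2)$. Unlike $(1)$ it is not a relator identity in the fixed context alone: its truth genuinely uses the split structure through the Markov move, and indeed the naive combination $a^{-1}\sigma_n^2+a\sigma_n^{-2}-(a+a^{-1})e$ is \emph{not} a universal skein relator. The guiding principle is that $(2)$ is the difference of the two one-crossing resolutions of the clasp: resolving a single crossing of $\sigma_n^2$ ties $X$ to $E$ and to the connected sum $Q:=P\bigl([\beta]_n^{\;\wh{}}\#[\gamma]_p^{\;\wh{}}\bigr)$, resolving a single crossing of $\sigma_n^{-2}$ ties $Y$ to $E$ and to the \emph{same} $Q$, and subtracting cancels $Q$. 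Since the resolving move $(\text{I})$ is not available, I would reproduce this cancellation intrinsically: recall $(\text{III}_{\text B})=\sigma_2^{-1}(\text{I}_{\text B})\sigma_2-\sigma_2(\text{I}_{\text B})\sigma_2^{-1}$ is precisely such a difference of conjugated $(\text{I}_{\text B})$'s in which the smoothing terms already cancel. Concretely I would take the relator $(\text{III}'_{\text B})=a^{-1}\sigma_1\sigma_2^2\sigma_1+a\sigma_2^2-a^{-1}\sigma_2\sigma_1^2\sigma_2-a\sigma_1^2$ built in the proof of Lemma~\ref{lem:key_reduction} (it already displays $\sigma^2$-clasp terms), place it so that one clasp sits on the junction strand, multiply by $\beta$ and $\gamma^{\triangleright n}$, and take closures; then I would use the Markov and slide moves of this section to destabilize the ``through'' terms, checking that their connected-sum values cancel in pairs and that the surviving clasp and split terms assemble exactly into $(2)$.

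The bookkeeping in that last step is where the real work lies, and two points need care. First, the destabilizations must be arranged so that every spurious $Q$-contribution is matched and cancelled, which is what forces the coefficients $a^{-1},a,-(a+a^{-1})$ in $(2)$; the identity $(\text{III}_{\text B})=\sigma_2^{-1}(\text{I}_{\text B})\sigma_2-\sigma_2(\text{I}_{\text B})\sigma_2^{-1}$ is the structural reason the cancellation can be achieved without ever invoking $(\text{I})$. Second, the construction needs a generator adjacent to the junction ($\sigma_{n-1}$ or $\sigma_{n+1}$); in the boundary cases where $\beta$ or $\gamma$ is too short to supply one, I would first create room by a Markov stabilization (adding a trivial strand, which only multiplies the relevant closures by a controlled factor) and then run the same argument, or else verify those cases directly as a base step.
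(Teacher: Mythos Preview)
Your plan is exactly the paper's: derive (1) from (II), derive (2) from (III) together with Markov/slide moves after passing to one extra strand, and then eliminate $Y$ by forming $(1)-a\cdot(2)$. The only point to adjust is the specific right-multiple of $(\text{III}_{\text B})$ used to produce (2): instead of $(\text{III}'_{\text B})$, the paper multiplies $(\text{III}_{\text B})$ on the right by $\sigma_2\sigma_1^{-1}$, obtaining
\[
a^{-1}\,\sigma_2^{-2}\sigma_1^2\sigma_2 \;+\; a\,\sigma_2\sigma_1^{-2} \;-\; a^{-1}\,\sigma_2 \;-\; a\,\sigma_1^2\sigma_2^{-1}\sigma_1^{-2},
\]
shifts this to positions $n,n{+}1$, sandwiches it between $\beta$ and $\gamma^{\triangleright(n+1)}$, and closes in $B_{n+p+1}$. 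Each of the four resulting closed braids then destabilizes, via a short sequence of conjugacy, Markov and slide moves, \emph{directly} to one of $X$, $Y$, $E$, $E$, yielding $a^{-1}X+aY=(a^{-1}+a)E$ on the nose. So the ``cancellation of $Q$-terms'' you anticipate never actually arises with this choice, and working one strand up from the outset also disposes of your boundary-case worry uniformly. With $(\text{III}'_{\text B})$ placed at the junction the four closures do not all reduce to $X,Y,E$; that is why the other multiplier is the right one.
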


\begin{proof}
The braid form of axioms (II) and (III) are the relators
(II$_{\text{B}}$) and (III$_{\text{B}}$), respectively.
Multiplying (III$_{\text{B}}$) by $\sigma_2\sigma_1^{-1}$ on the right we get another relator
\[
a^{-1}\cdot \sigma_2^{-2}\sigma_1^2\sigma_2
+ a\cdot \sigma_2\sigma_1^{-2}
- a^{-1}\cdot \sigma_2
- a\cdot \sigma_1^2\sigma_2^{-1}\sigma_1^{-2} .
\]
It gives us an equality between the $P$'s of closed $(n+p+1)$-braids:
\begin{align*}
&a^{-1}\cdot P\left( [\beta(\sigma_{n+1}^{-2}\sigma_{n}^2\sigma_{n+1})\gamma^{\triangleright (n+1)}]_{n+p+1}^{\;\wh{}} \right)
+ a\cdot P\left( [\beta(\sigma_{n+1}\sigma_{n}^{-2})\gamma^{\triangleright (n+1)}]_{n+p+1}^{\;\wh{}} \right)
\\
& - a^{-1}\cdot P\left( [\beta\sigma_{n+1}\gamma^{\triangleright (n+1)}]_{n+p+1}^{\;\wh{}} \right)
- a\cdot P\left( [\beta(\sigma_{n}^2\sigma_{n+1}^{-1}\sigma_{n}^{-2})\gamma^{\triangleright (n+1)}]_{n+p+1}^{\;\wh{}} \right)
=0 .
\end{align*}
These closed braids can be simplified via isotopy moves
(c=conjugacy, M=Markov and s=slide):
\begin{align*}
[\beta\sigma_{n+1}^{-2}\sigma_{n}^2\sigma_{n+1}\gamma^{\triangleright (n+1)}]_{n+p+1}^{\;\wh{}}
& \overset{\text{c}}\rightsquigarrow
[\beta\sigma_{n}^2\sigma_{n+1}\gamma^{\triangleright (n+1)}\sigma_{n+1}^{-2}]_{n+p+1}^{\;\wh{}}
\\
& \overset{\text{s}}{\rightsquigarrow}
[\beta\sigma_{n}^2\sigma_{n+1}^{-1}\gamma^{\triangleright (n+1)}]_{n+p+1}^{\;\wh{}}
\overset{\text{M}}{\rightsquigarrow}
[\beta\sigma_{n}^2\gamma^{\triangleright n}]_{n+p}^{\;\wh{}} ;
\\
[\beta\sigma_{n+1}\sigma_{n}^{-2}\gamma^{\triangleright (n+1)}]_{n+p+1}^{\;\wh{}}
& \overset{\text{M}}{\rightsquigarrow}
[\beta\gamma^{\triangleright n}\sigma_{n}^{-2}]_{n+p}^{\;\wh{}}
\overset{\text{s}}\rightsquigarrow
[\beta\sigma_{n}^{-2}\gamma^{\triangleright n}]_{n+p}^{\;\wh{}} ;
\\
[\beta\sigma_{n+1}\gamma^{\triangleright (n+1)}]_{n+p+1}^{\;\wh{}}
& \overset{\text{M}}{\rightsquigarrow}
[\beta\gamma^{\triangleright n}]_{n+p}^{\;\wh{}} ;
\\
[\beta\sigma_{n}^2\sigma_{n+1}^{-1}\sigma_{n}^{-2}\gamma^{\triangleright (n+1)}]_{n+p+1}^{\;\wh{}}
& \overset{\text{M}}{\rightsquigarrow}
[\beta\sigma_{n}^2\gamma^{\triangleright n}\sigma_{n}^{-2}]_{n+p}^{\;\wh{}}
\overset{\text{s}}{\rightsquigarrow}
[\beta\gamma^{\triangleright n}]_{n+p}^{\;\wh{}} .
\end{align*}
Since $P_L$ is isotopy invariant, the above equality becomes
\begin{gather*}
a^{-1}\cdot P\left( [\beta\sigma_{n}^2\gamma^{\triangleright n}]_{n+p}^{\;\wh{}} \right)
+ a\cdot P\left( [\beta\sigma_{n}^{-2}\gamma^{\triangleright n}]_{n+p}^{\;\wh{}} \right)
= (a^{-1}+a)\cdot P\left( [\beta\gamma^{\triangleright n}]_{n+p}]_{n+1}^{\;\wh{}} \right) .
\\
\intertext{Comparing it with the equality (from (II$_{\text{B}}$))}
a^{-2}\cdot P\left( [\beta\sigma_{n}^2\gamma^{\triangleright n}]_{n+p}^{\;\wh{}} \right)
+ a^2\cdot P\left( [\beta\sigma_{n}^{-2}\gamma^{\triangleright n}]_{n+p}^{\;\wh{}} \right)
= (2+z^2)\cdot P\left( [\beta\gamma^{\triangleright n}]_{n+p}^{\;\wh{}} \right) ,
\end{gather*}
we get the desired conclusion.
\end{proof}

\begin{cor}
\label{cor:stabilization}
Under the assumption of the above lemma, the following two relations are equivalent to each other:
\begin{gather*}
P
\begin{pmatrix}
\includegraphics[height=1.2cm]{fig_012/IO-0}
\end{pmatrix}
=z^{-1}(a^{-1}-a)\cdot
P
\begin{pmatrix}
\;\;
\includegraphics[height=1.2cm]{fig_012/I-0}
\;\;
\end{pmatrix} ;
\tag*{\rm(IO)}
\\
P
\begin{pmatrix}
\includegraphics[height=1.2cm]{fig_012/Phi+-0}
\end{pmatrix}
=az^{-1}(1+z^2-a^2)\cdot
P
\begin{pmatrix}
\;\;
\includegraphics[height=1.2cm]{fig_012/I-0}
\;\;
\end{pmatrix} .
\tag*{($\Phi$)}
\end{gather*}
\end{cor}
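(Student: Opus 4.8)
The plan is to extract both relations from a single identity furnished by Lemma~\ref{lem:stabilization3}, and then pass between them by a one-line manipulation in the integral domain $\Lambda=\mathbb Z[a^{\pm1},z^{\pm1}]$. First I would specialize Lemma~\ref{lem:stabilization3} to $p=1$ with $\gamma=e$ the trivial $1$-braid, so that $\gamma^{\triangleright n}=e$. Then the three closed braids appearing become easy to name: $[\beta\gamma^{\triangleright n}]_{n+1}^{\;\wh{}}=[\beta]_{n+1}^{\;\wh{}}$ is $[\beta]_n^{\;\wh{}}$ together with a disjoint free circle, i.e.\ the configuration $\textup{IO}$; the braid $[\beta\sigma_n^2\gamma^{\triangleright n}]_{n+1}^{\;\wh{}}=[\beta\sigma_n^2]_{n+1}^{\;\wh{}}$ clasps that extra circle positively around the $n$-th strand, i.e.\ the configuration $\Phi^+$; and $[\beta]_n^{\;\wh{}}$ is the configuration $\textup{I}$. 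Since every link with a distinguished strand can be presented as a closed braid having that strand as the $n$-th strand, this is exactly the closed-braid incarnation of the local pictures, and the lemma reads
\[
(1+z^2-a^2)\cdot P(\textup{IO}) = (a^{-2}-1)\cdot P(\Phi^+).
\]

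Next I would record the elementary identity $(a^{-2}-1)\,a=a^{-1}-a$ in $\Lambda$. With this in hand the two implications are symmetric. Assuming $\textup{(IO)}$, I substitute $P(\textup{IO})=z^{-1}(a^{-1}-a)\,P(\textup{I})$ into the displayed identity to obtain $(a^{-2}-1)\,P(\Phi^+)=(a^{-2}-1)\cdot az^{-1}(1+z^2-a^2)\,P(\textup{I})$; since $\Lambda$ is an integral domain and $a^{-2}-1$ is a non-zero-divisor, cancelling it yields precisely $(\Phi)$. Conversely, assuming $(\Phi)$, I substitute $P(\Phi^+)=az^{-1}(1+z^2-a^2)\,P(\textup{I})$ into the displayed identity and cancel the non-zero-divisor $1+z^2-a^2$ to recover $\textup{(IO)}$. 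Because Lemma~\ref{lem:stabilization3} holds for every $\beta\in B_n$, the displayed identity holds for every link, so this per-link equivalence upgrades to the equivalence of the two relations.

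I expect no serious obstacle in the algebra; the only point that needs care is the first paragraph, namely the correct topological identification of the three pictures $\textup{IO}$, $\textup{I}$, $\Phi^+$ with the closed braids $[\beta]_{n+1}^{\;\wh{}}$, $[\beta]_n^{\;\wh{}}$, and $[\beta\sigma_n^2]_{n+1}^{\;\wh{}}$, together with the remark that adding a disjoint circle and clasping it around a strand are exactly the operations built into Lemma~\ref{lem:stabilization3} when $\gamma$ is the trivial $1$-braid. Once that dictionary is fixed, the equivalence is immediate, the numerical coincidence being that multiplying the coefficient $z^{-1}(a^{-1}-a)$ of $\textup{(IO)}$ by $(1+z^2-a^2)$ and dividing by $(a^{-2}-1)$ produces exactly the coefficient $az^{-1}(1+z^2-a^2)$ of $(\Phi)$, via $\frac{a^{-1}-a}{a^{-2}-1}=a$.
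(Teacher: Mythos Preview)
Your proposal is correct and matches the paper's proof: both specialize Lemma~\ref{lem:stabilization3} to $[\gamma]_p=[e]_1$, translate the pictures $\textup{IO}$, $\textup{I}$, $\Phi^+$ into the closed braids $[\beta]_{n+1}^{\;\wh{}}$, $[\beta]_n^{\;\wh{}}$, $[\beta\sigma_n^2]_{n+1}^{\;\wh{}}$, and read off the equivalence. The paper compresses everything into one sentence, whereas you spell out the cancellation of the non-zero-divisors $a^{-2}-1$ and $1+z^2-a^2$ in $\Lambda$; that extra care is fine and not a departure in approach.
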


\begin{proof}
The braid form of these two relations are, respectively,
\begin{alignat}{2}
P\left( [\beta]_{n+1}^{\;\wh{}} \right) &= z^{-1}(a^{-1}-a)\cdot P\left( [\beta]_{n}^{\;\wh{}} \right)
&&\quad \text{for any braid } \beta\in B_n;
\tag*{(IO$_{\text{B}}$)}
\\
P\left( [\beta\sigma_{n}^2]_{n+1}^{\;\wh{}} \right) &= az^{-1}(1+z^2-a^2)\cdot P\left( [\beta]_{n}^{\;\wh{}} \right)
&&\quad \text{for any braid } \beta\in B_n.
\tag*{($\Phi_{\text{B}}$)}
\end{alignat}
They are equivalent to each other by the above lemma with $[\gamma]_p:=[e]_1$.
\end{proof}

There is a parallel statement for Jones polynomial:

\begin{cor}
\label{cor:stabilization_Jones}
Assume that $V_L \in \mathbb Z[t^{\pm\frac12}]$ is
an invariant of oriented links that satisfies skein relations \textup{(II$_V$)} and \textup{(III$_V$)}.
Then the following two relations are equivalent to each other:
\begin{gather*}
V
\begin{pmatrix}
\includegraphics[height=1.2cm]{fig_012/IO-0}
\end{pmatrix}
=-(t^{\frac12}+t^{-\frac12})\cdot
V
\begin{pmatrix}
\;\;
\includegraphics[height=1.2cm]{fig_012/I-0}
\;\;
\end{pmatrix} ;
\tag*{\rm(IO$_V$)}
\\
V
\begin{pmatrix}
\includegraphics[height=1.2cm]{fig_012/Phi+-0}
\end{pmatrix}
=-t^{\frac32}(t+t^{-1})\cdot
V
\begin{pmatrix}
\;\;
\includegraphics[height=1.2cm]{fig_012/I-0}
\;\;
\end{pmatrix} .
\tag*{($\Phi_V$)}
\end{gather*}
\end{cor}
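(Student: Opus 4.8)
The plan is to mirror the proof of Corollary~\ref{cor:stabilization}: first establish a Jones-polynomial analogue of Lemma~\ref{lem:stabilization3}, then pass from the resulting stabilization identity to the equivalence of the two relations by a cancellation argument carried out intrinsically in the integral domain $\mathbb Z[t^{\pm\frac12}]$.

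First I would reprove Lemma~\ref{lem:stabilization3} verbatim, with the roles of (II$_{\text{B}}$) and (III$_{\text{B}}$) played by the braid forms of (II$_V$) and (III$_V$). Since (III$_V$) involves exactly the same underlying braids as (III) (only the scalar $a$ is replaced by $t$), multiplying its relator by $\sigma_2\sigma_1^{-1}$ on the right and running the identical sequence of conjugacy, Markov and slide moves — which are purely topological and do not see the coefficients — yields
\[
t^{-1}\cdot V\!\left( [\beta\sigma_{n}^2\gamma^{\triangleright n}]_{n+p}^{\;\wh{}} \right)
+ t\cdot V\!\left( [\beta\sigma_{n}^{-2}\gamma^{\triangleright n}]_{n+p}^{\;\wh{}} \right)
= (t^{-1}+t)\cdot V\!\left( [\beta\gamma^{\triangleright n}]_{n+p}^{\;\wh{}} \right).
\]
Eliminating the $\sigma_n^{-2}$-term against the equality furnished by (II$_V$) then gives the Jones stabilization identity
\[
(1-t^{-2})\cdot V\!\left( [\beta\sigma_{n}^2\gamma^{\triangleright n}]_{n+p}^{\;\wh{}} \right)
= (1+t^2-t-t^{-1})\cdot V\!\left( [\beta\gamma^{\triangleright n}]_{n+p}^{\;\wh{}} \right).
\]

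Next I would specialize to $[\gamma]_p:=[e]_1$, exactly as in Corollary~\ref{cor:stabilization}, so that the two sides become $V$ of the closures of $\beta\sigma_n^2$ and $\beta$ regarded as $(n+1)$-braids. The braid forms of the two relations in question are
\[
V\!\left( [\beta]_{n+1}^{\;\wh{}} \right)= -(t^{\frac12}+t^{-\frac12})\cdot V\!\left( [\beta]_{n}^{\;\wh{}} \right),
\qquad
V\!\left( [\beta\sigma_{n}^2]_{n+1}^{\;\wh{}} \right)= -t^{\frac32}(t+t^{-1})\cdot V\!\left( [\beta]_{n}^{\;\wh{}} \right).
\]
Using the factorizations $1+t^2-t-t^{-1}=t^{-1}(t-1)(t^2+1)$ and $1-t^{-2}=t^{-2}(t-1)(t+1)$ together with $t^{\frac12}+t^{-\frac12}=t^{-\frac12}(t+1)$ and $t+t^{-1}=t^{-1}(t^2+1)$, one checks that combining either relation with the stabilization identity reproduces the other up to a common scalar factor. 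Concretely, assuming (IO$_V$) the identity forces $(1-t^{-2})\cdot V([\beta\sigma_n^2]_{n+1}^{\;\wh{}})=(1-t^{-2})\cdot(-t^{\frac32}(t+t^{-1}))\cdot V([\beta]_n^{\;\wh{}})$, while assuming ($\Phi_V$) it forces $(1+t^2-t-t^{-1})\cdot V([\beta]_{n+1}^{\;\wh{}})=(1+t^2-t-t^{-1})\cdot(-(t^{\frac12}+t^{-\frac12}))\cdot V([\beta]_n^{\;\wh{}})$, both via the same bookkeeping of the factorizations above.

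The only step requiring care — and the point I expect to be the main obstacle — is the final cancellation. The coefficients $1-t^{-2}$ and $1+t^2-t-t^{-1}$ are \emph{not} units in $\mathbb Z[t^{\pm\frac12}]$, so one cannot simply divide them out. However $\mathbb Z[t^{\pm\frac12}]$ is an integral domain and these coefficients are nonzero, hence non-zero-divisors; since both sides of each displayed equality are genuine elements of $\mathbb Z[t^{\pm\frac12}]$ agreeing after multiplication by the relevant nonzero factor, they must already be equal, which delivers (IO$_V$)$\Leftrightarrow$($\Phi_V$). This intrinsic argument is unavoidable precisely because the naive substitution $a\mapsto t$, $z\mapsto t^{\frac12}-t^{-\frac12}$ into Corollary~\ref{cor:stabilization} is illegitimate here: $V$ is only assumed to satisfy (II$_V$) and (III$_V$), not to be a specialization of $P$, and moreover that substitution passes through the non-polynomial factor $z^{-1}$, so the equivalence must be re-established over $\mathbb Z[t^{\pm\frac12}]$ rather than imported from the skein case.
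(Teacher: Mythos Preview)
Your proposal is correct and follows essentially the same route as the paper. The paper gives no separate proof of Corollary~\ref{cor:stabilization_Jones}; it simply announces it as the ``parallel statement'' to Corollary~\ref{cor:stabilization}, so the intended argument is exactly what you do---rerun the proof of Lemma~\ref{lem:stabilization3} with $a$ replaced by $t$ and the (II$_V$), (III$_V$) relators in place of (II$_{\text B}$), (III$_{\text B}$), then specialize $[\gamma]_p=[e]_1$.

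Your added care about the cancellation step is well placed: the same non-unit cancellation is already implicit in the paper's one-line proof of Corollary~\ref{cor:stabilization} (one must divide by $a^{-2}-1$ or by $1+z^2-a^2$ there too), so you are not introducing a new idea, only making explicit what the paper leaves tacit. Your remark that the substitution $a\mapsto t$, $z\mapsto t^{1/2}-t^{-1/2}$ cannot be applied to the \emph{statement} of Corollary~\ref{cor:stabilization} (because of the $z^{-1}$) is accurate, but note that it \emph{can} be applied to the \emph{proof}, since Lemma~\ref{lem:stabilization3} never divides by $z$; this is precisely why the paper is content to call it a parallel statement.
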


For the Alexander-Conway polynomial, we have:

\begin{cor}
\label{cor:stabilization_Alexander-Conway}
Assume that $\Delta_L \in \mathbb Z[t^{\pm\frac12}]$ is
an invariant of oriented links that satisfies skein relations \textup{(II$_\Delta$)} and \textup{(III$_\Delta$)}.
Then $\Delta(L)=0$ for any split link $L$.
In particular, the following relation holds true:
\[
\Delta
\begin{pmatrix}
\includegraphics[height=1.2cm]{fig_012/IO-0}
\end{pmatrix}
=0 .
\tag*{\rm(IO$_\Delta$)}
\]
\end{cor}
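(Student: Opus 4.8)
The plan is to specialize the computation behind Lemma~\ref{lem:stabilization3} to the Alexander-Conway setting. The key observation is that the axioms (II$_\Delta$) and (III$_\Delta$) are exactly the skein relations (II) and (III) under the substitution $a\mapsto 1$, $z\mapsto t^{\frac12}-t^{-\frac12}$, and that the derivation of Lemma~\ref{lem:stabilization3} uses nothing beyond (II), (III) and isotopy invariance. So I would rerun that derivation verbatim for $\Delta$: multiply (III$_\Delta$) on the right by $\sigma_2\sigma_1^{-1}$, pass to closed braids, and simplify by the same conjugacy, Markov and slide moves. In place of the lemma's conclusion this produces $(1+z^2-1)\cdot\Delta([\beta\gamma^{\triangleright n}]_{n+p}^{\;\wh{}})=(1-1)\cdot\Delta([\beta\sigma_n^2\gamma^{\triangleright n}]_{n+p}^{\;\wh{}})$ with $z=t^{\frac12}-t^{-\frac12}$. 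Since $a=1$, the entire right-hand side collapses and we are left with $z^2\cdot\Delta([\beta\gamma^{\triangleright n}]_{n+p}^{\;\wh{}})=0$.

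Next, because $\mathbb Z[t^{\pm\frac12}]$ is an integral domain and $z=t^{\frac12}-t^{-\frac12}\neq0$, I can cancel the factor $z^2$ and conclude $\Delta([\beta\gamma^{\triangleright n}]_{n+p}^{\;\wh{}})=0$ for every $\beta\in B_n$ and $\gamma\in B_p$. I would then identify this closed braid geometrically: since $\beta$ involves only the strands $1,\dots,n$ while $\gamma^{\triangleright n}$ involves only the strands $n+1,\dots,n+p$, the two factors lie in disjoint blocks, so the closure $[\beta\gamma^{\triangleright n}]_{n+p}^{\;\wh{}}$ is the split union $[\beta]_n^{\;\wh{}}\sqcup[\gamma]_p^{\;\wh{}}$.

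To finish, I would note that every split link arises this way: writing a split link as $L=L_1\sqcup L_2$ with $L_1,L_2$ nonempty and presenting $L_1=[\beta]_n^{\;\wh{}}$, $L_2=[\gamma]_p^{\;\wh{}}$ as closed braids, we have $L=[\beta\gamma^{\triangleright n}]_{n+p}^{\;\wh{}}$, hence $\Delta(L)=0$. The relation (IO$_\Delta$) is then the special case in which one split component is a disjoint unknot, obtained by taking $\gamma=e\in B_1$: the link in the (IO) picture has a split-off unknot, so its $\Delta$ vanishes.

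The only real content is the first step, namely recognizing that setting $a=1$ makes the right-hand coefficient $a^{-2}-1$ of Lemma~\ref{lem:stabilization3} vanish while leaving the nonzero factor $1+z^2-a^2=z^2$ on the left; once this is seen, cancellation in the integral domain and the identification of the closure as a split link are routine. The one point I would verify carefully is that (II$_\Delta$) and (III$_\Delta$) really are the verbatim $a=1$ specializations of the relators used in the proof of Lemma~\ref{lem:stabilization3}, so that the same sequence of isotopy moves applies without change.
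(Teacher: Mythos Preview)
Your proposal is correct and follows essentially the same route as the paper: specialize Lemma~\ref{lem:stabilization3} via $a\mapsto1$, $z\mapsto t^{1/2}-t^{-1/2}$ so that the right-hand coefficient $a^{-2}-1$ vanishes and the left-hand factor becomes $z^2\ne0$, then cancel in the integral domain $\mathbb Z[t^{\pm1/2}]$ after identifying $[\beta\gamma^{\triangleright n}]_{n+p}^{\;\wh{}}$ with the split union $[\beta]_n^{\;\wh{}}\sqcup[\gamma]_p^{\;\wh{}}$. The paper's proof is terser but relies on exactly the same observation.
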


\begin{proof}
For links $L_1=[\beta]_n^{\;\wh{}}$ and $L_2=[\gamma]_p^{\;\wh{}}$,
the split link $L=L_1\sqcup L_2=[\beta\gamma^{\triangleright n}]_{n+p}^{\;\wh{}}$.
Then apply Lemma~\ref{lem:stabilization3}
with substitutions $a\mapsto 1$ and $z\mapsto (t^{\frac12}-t^{-\frac12})$.
\end{proof}

\section{Proof of Theorems \ref{thm:HOMFLY}--\ref{thm:Alexander-Conway}}
\label{sec:proof_Theorems}

We shall focus on Theorem~\ref{thm:HOMFLY}, then remark on the other two.

\begin{proof}[Proof of Theorem~\ref{thm:HOMFLY}]
Let us forget about the original definition of the skein polynomial, and
regard the symbol $P_L$ as a well-defined invariant of oriented links
which satisfies the axioms (II), (III), (IO) and (O).
By Corollary~\ref{cor:stabilization}, $P_L$ also satisfies axiom ($\Phi$).
We shall show that such an invariant $P_L$ is computable,
hence uniquely determined.

It suffices to prove the following claim by induction on $n$.
\subsection* {Inductive Claim($n$)}
For every $n$-braid $\beta\in B_n$,
$P \left( [\beta]_n^{\;\wh{}} \right)$ is computable.
\vspace{1ex}

When $n=1$, Claim($1$) is true because there is only one $1$-braid $[e]_1$.
Its closure is the trivial knot, whose $P$ must be $1$ by axiom (O).

Now assume inductively that Claim($n-1$) is true, we shall prove that Claim($n$) is also true.

Suppose $\beta$ is an $n$-braid.
By Lemma~\ref{lem:reduction}, the braid $\beta\in B_n$ is equivalent to (in a computable way)
a $\Lambda$-linear combination of
braids of the form $\alpha\sigma_{n-1}^k \gamma$ with $\alpha,\gamma\in B_{n-1}$ and $k\in\{0,\pm1,2\}$.
By Example~\ref{exam:relator_vs_relation} the (mod $\mathfrak I_n$) equivalence preserves
the $P$ of closure of braids.
So $P \left( [\beta]_n^{\;\wh{}} \right)$ is a $\Lambda$-linear combination (with computable coefficients) of
$P \left( [\alpha\sigma_{n-1}^k \gamma]_n^{\;\wh{}} \right)$'s.
For $k\in\{\pm1,0,2\}$, respectively, we have
\begin{alignat*}{2}
P \left( [\alpha\sigma_{n-1}^{\pm1} \gamma]_n^{\;\wh{}} \right)
&= P \left( [\alpha \gamma]_{n-1}^{\;\wh{}} \right)
&&\qquad\text{by isotopy},
\\
P \left( [\alpha\sigma_{n-1}^0 \gamma]_n^{\;\wh{}} \right)
&= z^{-1}(a^{-1}-a) \cdot P \left( [\alpha \gamma]_{n-1}^{\;\wh{}} \right)
&&\qquad\text{by (IO)},
\\
P \left( [\alpha\sigma_{n-1}^2 \gamma]_n^{\;\wh{}} \right)
&= az^{-1}(1+z^2-a^2) \cdot P \left( [\alpha \gamma]_{n-1}^{\;\wh{}} \right)
&&\qquad\text{by $(\Phi)$}.
\end{alignat*}

Since $P \left( [\alpha \gamma]_{n-1}^{\;\wh{}} \right)$
is computable by the inductive hypothesis Claim($n-1$),
we see $P \left( [\alpha\sigma_{n-1}^k \gamma]_n^{\;\wh{}} \right)$ is also computable.
Thus Claim($n$) is proved.

The induction on $n$ is now complete.
Hence $P$ is computable for every closed braid.
\end{proof}

\begin{rem}
The induction above, together with the reduction argument of Section \ref{sec:key_lem},
provides a recursive algorithm for computing $P \left( [\beta]_n^{\;\wh{}} \right)$.
\end{rem}

\begin{rem}
A remarkable feature of this algorithm is that
it never increases the number of components of links.
In fact, all the reductions in Section~\ref{sec:key_lem} are by
axioms (II) and (III) which respect the components,
while in this Section, components could get removed
but never added, by axioms (IO) and ($\Phi$).
So if we start off with a knot, we shall always get knots along the way,
the axioms (IO) and ($\Phi$) becoming irrelevant.
This observation works even for links with any given number of components,
once we set up a suitable normalization.
Hence the Theorem~\ref{thm:HOMFLY_knot}.
\end{rem}

\begin{rem}
For the Jones polynomial, the proof above works well with the substitutions
$a\mapsto t$ and $z\mapsto (t^{\frac12}-t^{-\frac12})$.
\end{rem}

\begin{rem}
The case of Alexander-Conway polynomial is only slightly different.
Corollary~\ref{cor:stabilization_Alexander-Conway} says (IO$_\Delta$) is a
consequence of axioms (II$_\Delta$) and (III$_\Delta$),
and ($\Phi_\Delta$) is taken as an axiom.
So the proof above also works through with the substitutions
$a\mapsto 1$ and $z\mapsto (t^{\frac12}-t^{-\frac12})$.

Actually, the argument in Section~\ref{sec:stabilizations} can be adapted to
work for Conway potential function of colored links,
to the effect that in \cite[Main Theorem]{J1},
the relation (IO) is a consequence of axioms (II) and (III)
hence can be removed from the list of axioms.
\end{rem}

\section{Proof of Theorems \ref{thm:HOMFLY_knot}--\ref{thm:Alexander-Conway_knot}}
\label{sec:proof_Theorems2}

Suppose $\mu$ is a given positive integer.
Regard $P_L$ as a well-defined invariant of oriented $\mu$-compo\-nent links
which satisfies the axioms (II), (III) and (U).
We shall temporarily expand the ring $\Lambda:=\mathbb Z[a^{\pm1},z^{\pm1}]$,
where the invariant $P_L$ takes value,
to $\wt\Lambda:=\mathbb Z[a^{\pm1},z^{\pm1},(a^{-2}-1)^{-1}]$,
to allow fractions with denominator a power of $(a^{-2}-1)$.
We shall show that such an invariant $P_L$ is computable,
hence uniquely determined.
It is the normalization (U) that brings the value $P_L$ back into the original $\Lambda$.

\begin{lem}
\label{lem:reduction_HOMFLY}
Suppose $\beta\in B_n$, $p\ge0$, and $[\beta]_{n+p}^{\;\wh{}}$ has $\mu$ components.
If $n>1$, then $P \left( [\beta]_{n+p}^{\;\wh{}} \right)$ is
computable as a $\wt\Lambda$-linear combination of terms
of the form $P \left( [\beta']_{n'+p'}^{\;\wh{}} \right)$, each with $\mu$ components,
$\beta'\in B_{n'}$, $n'<n$, $p'\ge p$, and
the $(a^{-2}-1)^{-1}$-exponent of the corresponding coefficient is at most $p'-p$.
\end{lem}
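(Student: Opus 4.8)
The plan is to run the single‑strand reduction of Lemma~\ref{lem:reduction} once and then dispose of the four possible exponents $k\in\{0,\pm1,2\}$ one at a time, invoking Lemma~\ref{lem:stabilization3} only for $k=2$. The entire argument is bookkeeping of strand numbers and of the power of $(a^{-2}-1)^{-1}$ that gets introduced.

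First I would apply Lemma~\ref{lem:reduction} to $\beta\in B_n$ (legitimate since $n>1$): modulo $\mathfrak I_n$ it equals a $\Lambda$-linear combination, with computable coefficients, of braids $\alpha\sigma_{n-1}^k\gamma$ with $\alpha,\gamma\in B_{n-1}$ and $k\in\{0,\pm1,2\}$. Because $\mathfrak I_n\subset\mathfrak R_n$, Example~\ref{exam:relator_vs_relation} turns this into an identity among the values $P([\,\cdot\,]_{n+p}^{\;\wh{}})$; the coefficients still lie in $\Lambda$, hence have $(a^{-2}-1)^{-1}$-exponent $0$. Since $\mathfrak I_n$ is generated by the homogeneous relators $(\text{II}_\text{B})$ and $(\text{III}_\text{B})$, every term $\alpha\sigma_{n-1}^k\gamma$ carries the underlying permutation of $\beta$, so each closure $[\alpha\sigma_{n-1}^k\gamma]_{n+p}^{\;\wh{}}$ still has $\mu$ components.

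Next I would handle a single term. Conjugating inside the closure gives $[\alpha\sigma_{n-1}^k\gamma]_{n+p}^{\;\wh{}}=[\delta\sigma_{n-1}^k]_{n+p}^{\;\wh{}}$ with $\delta:=\gamma\alpha\in B_{n-1}$. For $k=\pm1$ a Markov move deletes strand $n$, yielding $P([\delta]_{(n-1)+p}^{\;\wh{}})$ with the coefficient unchanged and $p'=p$. For $k=0$ strand $n$ is a free circle, so the closure is literally $[\delta]_{(n-1)+(p+1)}^{\;\wh{}}$, again with unchanged coefficient but $p'=p+1$. For $k=2$ I would invoke Lemma~\ref{lem:stabilization3} with its $\beta$ taken to be $\delta\in B_{n-1}$ and its $\gamma$ taken to be $e\in B_{p+1}$, obtaining
\[
P\!\left([\delta\sigma_{n-1}^2]_{n+p}^{\;\wh{}}\right)=(a^{-2}-1)^{-1}(1+z^2-a^2)\cdot P\!\left([\delta]_{(n-1)+(p+1)}^{\;\wh{}}\right),
\]
so that $p'=p+1$ and the coefficient picks up exactly one factor of $(a^{-2}-1)^{-1}$. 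In every case $\delta\in B_{n-1}$ gives $n'=n-1<n$ and $p'\ge p$; and, using that $\sigma_{n-1}^{\pm2}$ and free strands are invisible to the underlying permutation whose cycle count is the number of components, the target closure again has $\mu$ components.

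Assembling the cases, $P([\beta]_{n+p}^{\;\wh{}})$ becomes a $\wt\Lambda$-linear combination of terms $P([\delta]_{(n-1)+p'}^{\;\wh{}})$ of the required shape. I expect the one genuinely delicate point to be not geometry but the exponent bound: I must confirm that $p$ rises by one precisely when $k\in\{0,2\}$, that the lone appearance of $(a^{-2}-1)^{-1}$ occurs in the $k=2$ case, and that these two events coincide, so the $(a^{-2}-1)^{-1}$-exponent never exceeds $p'-p$ (meeting it with equality only when $k=2$). Keeping the component count honest is the remaining chore, but it reduces to the homogeneity remark together with the cycle‑counting description of the components of a closed braid.
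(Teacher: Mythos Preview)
Your argument is correct and follows the paper's own proof essentially step for step: apply Lemma~\ref{lem:reduction}, conjugate, then treat $k=\pm1$ by a Markov move, $k=0$ trivially, and $k=2$ via Lemma~\ref{lem:stabilization3}. You are simply more explicit than the paper about the two points it leaves as ``the required conditions are satisfied'': the homogeneity of $\mathfrak I_n$ forcing the component count to stay $\mu$, and the observation that the sole factor of $(a^{-2}-1)^{-1}$ arises in the $k=2$ case where $p$ also increases.
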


\begin{proof}
By Lemma~\ref{lem:reduction}, the braid $\beta\in B_n$ is equivalent to (in a computable way)
a $\Lambda$-linear combination of
braids of the form $\alpha\sigma_{n-1}^k \gamma$ with $\alpha,\gamma\in B_{n-1}$ and $k\in\{0,\pm1,2\}$.
So $P \left( [\beta]_{n+p}^{\;\wh{}} \right)$ is a $\wt\Lambda$-linear combination (with computable coefficients) of
$P \left( [\alpha\sigma_{n-1}^k \gamma]_{n+p}^{\;\wh{}} \right)$'s.
For $k\in\{\pm1,0,2\}$, respectively, we have
\begin{alignat*}{2}
P \left( [\alpha\sigma_{n-1}^{\pm1} \gamma]_{n+p}^{\;\wh{}} \right)
&= P \left( [\gamma\alpha \sigma_{n-1}^{\pm1}]_{n+p}^{\;\wh{}} \right)
&&\quad\text{by braid conjugation},
\\
&= P \left( [\gamma\alpha ]_{(n-1)+p}^{\;\wh{}} \right)
&&\quad\text{by Markov move},
\\
P \left( [\alpha\sigma_{n-1}^0 \gamma]_{n+p}^{\;\wh{}} \right)
&= P \left( [\alpha \gamma]_{(n-1)+(p+1)}^{\;\wh{}} \right)
&&\quad\text{obvious},
\\
P \left( [\alpha\sigma_{n-1}^2 \gamma]_{n+p}^{\;\wh{}} \right)
&= P \left( [\gamma\alpha\sigma_{n-1}^2]_{n+p}^{\;\wh{}} \right)
&&\quad\text{by braid conjugation}
\\
&= \frac{1+z^2-a^2}{a^{-2}-1} \cdot P \left( [\gamma \alpha]_{(n-1)+(p+1)}^{\;\wh{}} \right)
&&\quad\text{by Lemma~\ref{lem:stabilization3}}.
\end{alignat*}
The $P$'s on the right hand sides satisfy the required conditions.
\end{proof}

\begin{proof}[Proof of Theorem~\ref{thm:HOMFLY_knot}]
Suppose a link $L$ with $\mu$ components is presented as $[\beta]_{n}^{\;\wh{}}=[\beta]_{n+0}^{\;\wh{}}$.
Apply Lemma~\ref{lem:reduction_HOMFLY} repeatedly until no such reduction is possible.
Then $P \left( [\beta]_{n+0}^{\;\wh{}} \right)$ is computed as a $\wt\Lambda$-linear combination of
terms $P \left( [\beta']_{n'+p'}^{\;\wh{}} \right)$, each with $n'=1$ hence $\beta'=[e]_1$, such that
\begin{itemize}
\item[(1)] every $[\beta']_{n'+p'}^{\;\wh{}}=[e]_{1+p'}^{\;\wh{}}$ has $\mu$ components, hence $p'=\mu-1$,
and $[\beta']_{n'+p'}^{\;\wh{}} =[e]_\mu^{\;\wh{}} =U_\mu$;
and
\item[(2)] the $(a^{-2}-1)^{-1}$-exponent of every coefficient is at most $p'-0=\mu-1$.
\end{itemize}
Therefore, $P (L)$ is computable and, by axiom (U),
every term $P \left( [\beta']_{n'+p'}^{\;\wh{}} \right) $ has a factor $(a^{-2}-1)^{\mu-1}$
that can cancel the $(a^{-2}-1)^{-1}$-exponent in its coefficient,
so $P (L) \in \Lambda$.
\end{proof}

Theorem~\ref{thm:Jones_knot} can be proved similarly, but
Theorem~\ref{thm:Alexander-Conway_knot} needs modifications.
Define $\delta_p:=\sigma_1^2\sigma_2^2\dots\sigma_{p-1}^2 \in B_p$ whose closure
$[\delta_p]_p^{\;\wh{}}$ is the oriented $p$-component chain $C_p$.

\begin{lem}
\label{lem:reduction_Alexander-Conway}
Suppose $\beta\in B_{n+1}$, $p\ge1$, and $[\beta\delta_p^{\triangleright n}]_{n+p}^{\;\wh{}}$
has $\mu$ components.
If $n>0$, then $\Delta \left( [\beta\delta_p^{\triangleright n}]_{n+p}^{\;\wh{}} \right)$ is
computable as a $\mathbb Z[t^{\pm\frac12}]$-linear combination of terms
of the form $\Delta \left( [\beta'\delta_{p'}^{\triangleright n'}]_{n'+p'}^{\;\wh{}} \right)$,
each with $\mu$ components, $\beta'\in B_{n'+1}$, $n'<n$ and $p'\ge p$.
\end{lem}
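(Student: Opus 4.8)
The plan is to mirror the proof of Lemma~\ref{lem:reduction_HOMFLY}, but to exploit the chain $\delta_p$ in place of the free strands, thereby sidestepping the stabilization Lemma~\ref{lem:stabilization3}, whose coefficient $\frac{1+z^2-a^2}{a^{-2}-1}$ is meaningless under the Alexander-Conway substitution $a\mapsto1$, $z\mapsto t^{\frac12}-t^{-\frac12}$. First I would apply Lemma~\ref{lem:reduction} to the top strand of $\beta\in B_{n+1}$, writing $\beta$ modulo $\mathfrak I_{n+1}$ as a linear combination of braids $\alpha\sigma_n^k\gamma_0$ with $\alpha,\gamma_0\in B_n$ and $k\in\{0,\pm1,2\}$; the reductions of Section~\ref{sec:key_lem} use only the coefficients $a^{\pm1}$, $a^{\pm2}$ and $2+z^2$ of axioms (II) and (III), none of which involves $z^{-1}$, so after the substitution the coefficients genuinely lie in $\mathbb Z[t^{\pm\frac12}]$. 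Multiplying on the right by $\delta_p^{\triangleright n}$ keeps this an equivalence modulo $\mathfrak I_{n+p}$, hence preserves $\Delta$ of the closures; and since $\mathfrak I_{n+p}$ is generated by homogeneous elements, every surviving term shares the underlying permutation of $\beta$, so its closure has $\mu$ components.

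The argument then splits into the three values of $k$. Throughout, $\alpha,\gamma_0\in B_n$ commute with $\delta_p^{\triangleright n}$, since the former use $\sigma_1,\dots,\sigma_{n-1}$ while the latter uses $\sigma_{n+1},\dots,\sigma_{n+p-1}$. For $k=0$ the braid $\alpha\gamma_0\in B_n$ and $\delta_p^{\triangleright n}$ occupy disjoint blocks of strands, so $[\alpha\gamma_0\delta_p^{\triangleright n}]_{n+p}^{\;\wh{}}$ is the split union of $[\alpha\gamma_0]_n^{\;\wh{}}$ and $C_p$ (both nonempty, as $n>0$ and $p\ge1$); by Corollary~\ref{cor:stabilization_Alexander-Conway} its $\Delta$ vanishes, so this term drops out. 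For $k=\pm1$ I would commute $\gamma_0$ past $\delta_p^{\triangleright n}$ to reach $\alpha\sigma_n^{\pm1}\delta_p^{\triangleright n}\gamma_0$, then apply the generalized Markov move of Section~\ref{sec:stabilizations} (with the roles $\beta\mapsto\alpha$, $\gamma\mapsto\delta_p$, $\beta'\mapsto\gamma_0$) to delete $\sigma_n^{\pm1}$, and finally conjugate, obtaining $[(\gamma_0\alpha)\,\delta_p^{\triangleright(n-1)}]_{(n-1)+p}^{\;\wh{}}$. This is of the required form with $n'=n-1$ and $p'=p$.

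The decisive case is $k=2$. Here the index identity
\[
\sigma_n^2\,\delta_p^{\triangleright n}=\sigma_n^2\sigma_{n+1}^2\cdots\sigma_{n+p-1}^2=\delta_{p+1}^{\triangleright(n-1)}
\]
lets me absorb the square $\sigma_n^2$ into the chain, lengthening $\delta_p$ to $\delta_{p+1}$ rather than invoking Lemma~\ref{lem:stabilization3}. After commuting $\gamma_0$ across $\delta_p^{\triangleright n}$ and conjugating, this term becomes $[(\gamma_0\alpha)\,\delta_{p+1}^{\triangleright(n-1)}]_{(n-1)+(p+1)}^{\;\wh{}}$, again of the required form, now with $n'=n-1$ and $p'=p+1$. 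Collecting the three cases, $\Delta([\beta\delta_p^{\triangleright n}]_{n+p}^{\;\wh{}})$ is expressed as a $\mathbb Z[t^{\pm\frac12}]$-linear combination of terms $\Delta([\beta'\delta_{p'}^{\triangleright n'}]_{n'+p'}^{\;\wh{}})$ with $\beta'=\gamma_0\alpha\in B_{n'+1}$, $n'=n-1<n$, $p'\ge p$, each carrying $\mu$ components.

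I expect the main obstacle to be the bookkeeping of strand indices that certifies each reduced term has the advertised shape $[\beta'\delta_{p'}^{\triangleright n'}]$ — in particular the commutation $\gamma_0\delta_p^{\triangleright n}=\delta_p^{\triangleright n}\gamma_0$, the chain-lengthening identity $\sigma_n^2\delta_p^{\triangleright n}=\delta_{p+1}^{\triangleright(n-1)}$, and the legitimacy of the conjugation placing $\gamma_0$ in front within $B_{n+p-1}$. The conceptual crux, by contrast, is recognizing that this chain-lengthening in the $k=2$ case is exactly what replaces the unavailable stabilization coefficient; this is precisely why the normalization must be by $C_\mu$ rather than by $U_\mu$.
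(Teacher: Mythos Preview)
Your proposal is correct and follows the same route as the paper: apply Lemma~\ref{lem:reduction} to $\beta\in B_{n+1}$, multiply by $\delta_p^{\triangleright n}$, then dispose of the four values of $k$ exactly as you do---split-link vanishing for $k=0$, Markov move for $k=\pm1$, and the chain-lengthening identity $\sigma_n^{2}\delta_p^{\triangleright n}=\delta_{p+1}^{\triangleright(n-1)}$ for $k=2$. The paper conjugates first and then performs the Markov move, whereas you commute/Markov first and then conjugate, but this is only a cosmetic reordering; your extra remarks on homogeneity and on why the $k=2$ absorption replaces the unavailable stabilization coefficient are accurate elaborations of points the paper leaves implicit.
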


\begin{proof}
By Lemma~\ref{lem:reduction} (with substitutions $a\mapsto 1$ and $z\mapsto (t^{\frac12}-t^{-\frac12})$),
the braid $\beta\in B_{n+1}$ is equivalent to (in a computable way)
a $\mathbb Z[t^{\pm\frac12}]$-linear combination of
braids of the form $\alpha\sigma_{n}^k \gamma$ with $\alpha,\gamma\in B_{n}$ and $k\in\{0,\pm1,2\}$.
Multiplication by $\delta_p^{\triangleright n}$ makes the braid
$\beta\delta_p^{\triangleright n}$ equivalent to a linear combination of braids
of the form $\alpha\sigma_{n}^k \gamma\delta_p^{\triangleright n}$,
so $\Delta \left( [\beta\delta_p^{\triangleright n}]_{n+p}^{\;\wh{}} \right)$ is
computed as a linear combination of the
$\Delta \left( [\alpha\sigma_{n}^k \gamma\delta_p^{\triangleright n}]_{n+p}^{\;\wh{}} \right)$'s.
For $k\in\{\pm1,0,2\}$, respectively, we have
\begin{alignat*}{2}
\Delta \left( [\alpha\sigma_{n}^{\pm1} \gamma\delta_p^{\triangleright n}]_{n+p}^{\;\wh{}} \right)
&= \Delta \left( [\gamma\alpha\sigma_{n}^{\pm1} \delta_p^{\triangleright n}]_{n+p}^{\;\wh{}} \right)
&&\quad\text{by braid conjugacy},
\\
&= \Delta \left( [\gamma\alpha \delta_p^{\triangleright (n-1)}]_{n+p-1}^{\;\wh{}} \right)
&&\quad\text{by a Markov move},
\\
\Delta \left( [\alpha\sigma_{n}^0 \gamma\delta_p^{\triangleright n}]_{n+p}^{\;\wh{}} \right)
&= 0
&&\quad\text{by Corollary~\ref{cor:stabilization_Alexander-Conway}},
\\
\Delta \left( [\alpha\sigma_{n}^2 \gamma\delta_p^{\triangleright n}]_{n+p}^{\;\wh{}} \right)
&= \Delta \left( [\gamma\alpha\sigma_{n}^2 \delta_p^{\triangleright n}]_{n+p}^{\;\wh{}} \right)
&&\quad\text{by braid conjugacy}
\\
&= \Delta \left( [\gamma\alpha \delta_{p+1}^{\triangleright (n-1)}]_{n+p}^{\;\wh{}} \right)
&&\quad\text{by definition}.
\end{alignat*}
The $\Delta$'s on the right hand sides are in the desired form with $n'=n-1$
and with $\mu$ components.
\end{proof}

\begin{proof}[Proof of Theorem~\ref{thm:Alexander-Conway_knot}]
Suppose a link $L$ with $\mu$ components is presented as
$[\beta]_{n+1}^{\;\wh{}}=[\beta\delta_1^{\triangleright n}]_{n+1}^{\;\wh{}}$.
Apply Lemma~\ref{lem:reduction_Alexander-Conway} repeatedly until no such reduction is possible.
Then $\Delta \left( [\beta\delta_1^{\triangleright n}]_{n+1}^{\;\wh{}} \right)$
is computed as a $\mathbb Z[t^{\pm\frac12}]$-linear combination of
terms $\Delta \left( [\beta'\delta_{p'}^{\triangleright n'}]_{n'+p'}^{\;\wh{}} \right)$,
with $n'=0$.
Hence each $\beta'=[e]_1$, $p'=\mu$ the number of components,
and each $[\beta'\delta_{p'}^{\triangleright n'}]_{n'+p'}^{\;\wh{}}
= [\delta_\mu]_\mu^{\;\wh{}} = C_\mu$.
Therefore $\Delta (L)$ is computable and moreover, by axiom (C$_\Delta$),
divisible by $\Delta(C_\mu) = (t^{\frac12}-t^{-\frac12})^{\mu-1}$.
\end{proof}

\end{document}